\documentclass[a4paper,12pt]{article}
\usepackage{mathrsfs,amsthm,graphicx,color,verbatim,bm,bbm,amsmath,amsfonts,amssymb,newclude,nicefrac,amsfonts,graphicx,enumerate,hyperref}
\usepackage[latin1]{inputenc}

\newtheorem{lemma}{Lemma}[section]
\newtheorem{theorem}[lemma]{Theorem}

\newtheorem{corollary}[lemma]{Corollary}

\textwidth17.2cm
\textheight22cm
\voffset-1.5cm
\hoffset-1.9cm

\providecommand{\N}{{\ensuremath{\mathbbm{N}}}}
\providecommand{\R}{{\ensuremath{\mathbbm{R}}}}
\providecommand{\E}{{\ensuremath{\mathbb{E}}}}
\renewcommand{\P}{{\ensuremath{\mathbb{P}}}}
\providecommand{\1}{{\ensuremath{\mathbbm{1}}}}

\newcommand{\diffns}[1]{d#1}

\newcommand{\lpnb}[3]{L^{#1}(#2;#3)}
\newcommand{\smallsum}{\textstyle\sum}
\newcommand{\smallprod}{\textstyle\prod}

\newcommand{\stochval}[1]{|\![#1]\!|}
\newcommand{\nzspace}[1]{{#1}\setminus\{0\}}

\title{Counterexamples to regularities for the derivative processes associated to stochastic evolution equations}

\author{Mario Hefter, Arnulf Jentzen, and Ryan Kurniawan}

\begin{document}

\maketitle

\begin{abstract}

In the recent years there has been an increased interest in studying regularity properties of the derivatives of stochastic evolution equations (SEEs) with respect to their initial values.
In particular, in the scientific literature it has been shown for every natural number $n\in\N$ that if the nonlinear drift coefficient and the nonlinear diffusion coefficient of the considered SEE are $n$-times continuously Fr\'{e}chet differentiable, 
then the solution of the considered SEE is also $n$-times continuously Fr\'{e}chet differentiable with respect to its initial value and the corresponding derivative processes satisfy a suitable regularity property in the sense that the $n$-th derivative process can be extended continuously to
$n$-linear operators on negative Sobolev-type spaces with regularity parameters $\delta_1,\delta_2,\ldots,\delta_n\in[0,\nicefrac{1}{2})$ provided that the condition
$
  \sum^n_{i=1}
  \delta_i
  < \nicefrac{1}{2}
$ 
is satisfied.
The main contribution of this paper is to reveal that this condition 
can essentially not be relaxed.
\end{abstract}

\tableofcontents

\section{Introduction}
In the recent years there has been an increased interest in studying regularity properties of the derivatives of stochastic evolution equations (SEEs) with respect to their initial values (cf., e.g., Cerrai~\cite[Chapters~6--7]{c01},  Debussche~\cite[Lemmas~4.4--4.6]{Debussche2011}, Wang \& Gan~\cite[Lemma~3.3]{WangGan2013_Weak_convergence}, Andersson et al.~\cite{AnderssonJentzenKurniawan2016arXiv,AnderssonJentzenKurniawan2016a}).
One important reason for this increased interest is that appropriate estimates on the first, second, and higher order derivatives of SEEs with respect to their initial values have been used as key tools for establishing essentially sharp weak convergence rates (see, e.g., Debussche~\cite[Theorem~2.2]{Debussche2011},
Wang \& Gan~\cite[Theorem~2.1]{WangGan2013_Weak_convergence}, 
Andersson \& Larsson~\cite[Theorem~1.1]{AnderssonLarsson2015},
Br\'{e}hier~\cite[Theorem~1.1]{Brehier2014},
Br\'{e}hier \& Kopec~\cite[Theorem~5.1]{BrehierKopec2016},
Wang~\cite[Corollary~1]{Wang2016481},
Conus et al.~\cite[Corollary~5.2]{ConusJentzenKurniawan2014arXiv}, 
\cite[Corollary~8.2]{JentzenKurniawan2015arXiv}, 
and Hefter et al.~\cite[Theorem~1.1]{HefterJentzenKurniawan2016}).
In particular, in the recent article Andersson et al.~\cite{AnderssonJentzenKurniawan2016a} it has been shown that if the nonlinear drift coefficient and the nonlinear diffusion coefficient of an SEE are $n$-times continuously Fr\'{e}chet differentiable, 
then the solution of the considered SEE is also $n$-times continuously Fr\'{e}chet differentiable with respect to its initial value and the corresponding derivative processes satisfy a suitable regularity property (see item~(iv) of Theorem~1.1 in Andersson et al.~\cite{AnderssonJentzenKurniawan2016a} and item~\eqref{item:derivative.strong.bound} of Corollary~\ref{cor:positive.result} below, respectively).
In this work we reveal that this regularity property can essentially not be improved.
To illustrate our result in more detail we consider the following notation throughout the rest of this introductory section.
For every measure space $ ( \Omega , \mathcal{F}, \mu ) $,
every measurable space $ ( S , \mathcal{S} ) $,
and every $\mathcal{F}$/$\mathcal{S}$-measurable function
$ X \colon \Omega\to S $
we denote by
$
\left[ X \right]_{
	\mu, \mathcal{S}
}
$
the set given by
\begin{multline}
\left[ X \right]_{
	\mu, \mathcal{S}
}
=
\big\{
Y\colon\Omega\to S
\colon
(Y \text{ is }\mathcal{F}\text{/}\mathcal{S}\text{-measurable})
\\
\wedge
(
\exists \, A \in \mathcal{F} \colon
\mu(A) = 0 
\text{ and }
\{ \omega \in \Omega \colon X(\omega) \neq Y(\omega) \}
\subseteq A
)
\big\}
.
\end{multline}
We first briefly review the above mentioned regularity result on derivative processes of SEEs from Andersson et al.~\cite{AnderssonJentzenKurniawan2016a}.
More formally, Theorem~1.1 in Andersson et al.~\cite{AnderssonJentzenKurniawan2016a} includes the following result, Corollary~\ref{cor:positive.result} below, as a special case.
\begin{corollary}
	\label{cor:positive.result}
For every real number $T\in(0,\infty)$,  
all nontrivial separable $\R$-Hilbert spaces 
$ ( H, \left\| \cdot \right\|_H, \langle \cdot , \cdot \rangle_H ) $
and 
$ ( U, \left\| \cdot \right\|_U, \langle \cdot , \cdot \rangle_U ) $,
every probability space $(\Omega,\mathcal{F},\P)$, 
every normal filtration
$(\mathcal{F}_t)_{t\in[0,T]}$ on $(\Omega,\mathcal{F},\P)$, 
every $\operatorname{Id}_U$-cylindrical 
$ ( \Omega , \mathcal{F}, \P, ( \mathcal{F}_t )_{ t \in [0,T] } ) $-Wiener process $(W_t)_{t\in[0,T]}$, 
every generator 
$A\colon D(A)\subseteq H \to H$ 
of a strongly continuous analytic semigroup with 
$
\operatorname{spectrum}(A)
\subseteq
\{z\in\mathbb{C}\colon\operatorname{Re}(z)<0\}
$, 
and all infinitely often Fr\'{e}chet differentiable functions 
$ F \colon H \to H  $ 
and 
$ B \colon H \to HS(U,H) $
with globally bounded derivatives it holds
\begin{enumerate}[(i)]
	\item
	that there exist up-to-modifications unique
	$ ( \mathcal{F}_t )_{ t \in [0,T] } $/$ \mathcal{B}(H) $-predictable stochastic processes
	$ X^x \colon 
	[0,T] \times \Omega \to H $, 
	$ x \in H $, 
	which fulfill for all
	$ x \in H $, 
	$ p \in [2,\infty) $, 
	$ t \in [0,T] $
	that
$
\int^t_0
\|e^{(t-s)A} F(X^x_s)\|_H
+
\|e^{(t-s)A} B(X^x_s)\|^2_{HS(U,H)}
\, ds
< \infty
$, 
	$
	\sup_{ s \in [0,T] }
	\E\big[\|
	X_s^x
	\|^p_H
	\big]
	< \infty
	$, 
	and
	\begin{equation}
	\label{eq:SEE.positive}
	\begin{split}
	&
	[
	X_t^x
	-
	e^{tA} x
	]_{ \P, \mathcal{B}(H) }
	=
	\left[
	\int_0^t
	e^{ ( t - s ) A }
	F(X_s^x)
	\, \diffns s
	\right]_{ \P, \mathcal{B}(H) }
	+
	\int_0^t
	e^{ ( t - s ) A }
	B(X_s^x)
	\, \diffns W_s
	,
	\end{split}
	\end{equation}
	\item
	that it holds for all 
	$ p \in [2,\infty) $, 
	$ t \in [0,T] $ 
	that 
	$
	H \ni x \mapsto [X^x_t]_{\P,\mathcal{B}(H)} \in \lpnb{p}{\P}{H}
	$ 
	is infinitely often Fr\'{e}chet differentiable, and
\item
\label{item:derivative.strong.bound}
that it holds for all 
$ p \in [2,\infty) $, 
$ n \in \N = \{1,2,\ldots\} $, 
$q\in[0,\infty)$, 
$ \delta_1, \delta_2, \ldots, \delta_n \in [0,\nicefrac{1}{2}) $, 
$ t \in (0,T] $
with 
$
\sum^n_{i=1}
\delta_i
< \nicefrac{1}{2}
$
that 
\begin{equation}
\label{eq:derivative.strong.bound}
\sup_{ x \in H }
\sup_{ u_1,u_2,\ldots,u_n \in \nzspace{H} }
\left[
\frac{
  \big(
  \E\big[\|
    (-A)^{-q} 
    (\frac{d^n}{dx^n}[X^x_t]_{\P,\mathcal{B}(H)})(u_1,u_2,\ldots,u_n)
  \|^p_H\big]
  \big)^{\nicefrac{1}{p}}
}{  
\prod_{ i = 1 }^n
\left\| (-A)^{-\delta_i} u_i \right\|_H
}
\right]
<
\infty
.
\end{equation}
\end{enumerate}
\end{corollary}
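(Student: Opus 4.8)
The plan is to obtain items~(i)--(iii) as specializations of Theorem~1.1 in Andersson et al.~\cite{AnderssonJentzenKurniawan2016a}, so the first task is to check that the present hypotheses fit the abstract framework of that theorem. One takes the interpolation scale generated by the fractional powers $(-A)^{r}$, $r\in\R$, of the sectorial operator $-A$: the assumption $\operatorname{spectrum}(A)\subseteq\{z\in\mathbb{C}\colon\operatorname{Re}(z)<0\}$ guarantees $0\in\rho(A)$ (hence $(-A)^{-r}\in L(H)$ for $r\in[0,\infty)$), while analyticity yields the smoothing bounds $\sup_{t\in(0,T]}t^{\,r}\,\|(-A)^{r}e^{tA}\|_{L(H)}<\infty$ for all $r\in[0,\infty)$. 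The requirement that $F$ and $B$ be infinitely often Fr\'{e}chet differentiable with globally bounded derivatives is stronger than the differentiability and growth conditions imposed there, so the identification is legitimate. Given this, items~(i) and~(ii)---existence and uniqueness of a predictable mild solution together with the integrability and infinite Fr\'{e}chet differentiability of $H\ni x\mapsto[X_t^x]_{\P,\mathcal{B}(H)}\in\lpnb{p}{\P}{H}$---are exactly the corresponding conclusions of the cited theorem, and the displayed integrability condition in~(i) follows from the global boundedness of $F$ and $B$ together with the above semigroup bounds.

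The substance is in item~\eqref{item:derivative.strong.bound}, and I would prove it by an induction on $n\in\N$ establishing, for every $p\in[2,\infty)$ and all $\delta_1,\dots,\delta_n\in[0,\nicefrac{1}{2})$ with $\sum_{i=1}^n\delta_i<\nicefrac{1}{2}$, the stronger time-uniform bound
\begin{equation*}
  \sup_{x\in H}\ \sup_{u_1,\dots,u_n\in\nzspace{H}}\ \sup_{t\in(0,T]}\
  \frac{
    t^{\,\sum_{i=1}^n\delta_i}\,
    \big(\E\big[\|(\tfrac{d^n}{dx^n}[X_t^x]_{\P,\mathcal{B}(H)})(u_1,\dots,u_n)\|_H^p\big]\big)^{\nicefrac{1}{p}}
  }{
    \prod_{i=1}^n\|(-A)^{-\delta_i}u_i\|_H
  }<\infty ,
\end{equation*}
from which \eqref{eq:derivative.strong.bound} follows for every $q\in[0,\infty)$ because $(-A)^{-q}\in L(H)$ and, at fixed $t\in(0,T]$, the factor $t^{-\sum_{i}\delta_i}$ is a finite constant. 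For $n=1$ the process $Y_t^{x,u}=(\tfrac{d}{dx}[X_t^x]_{\P,\mathcal{B}(H)})(u)$ solves the variational equation obtained by formally differentiating~\eqref{eq:SEE.positive}, whose leading term already carries the singularity $\|e^{tA}u\|_H=\|(-A)^{\delta}e^{tA}(-A)^{-\delta}u\|_H\le C\,t^{-\delta}\|(-A)^{-\delta}u\|_H$; feeding the ansatz $\|Y_s^{x,u}\|_{\lpnb{p}{\P}{H}}\lesssim s^{-\delta}\|(-A)^{-\delta}u\|_H$ into the drift and, via Burkholder--Davis--Gundy together with Minkowski's integral inequality, into the stochastic convolution produces the time integrals $\int_0^t s^{-\delta}\diff{s}$ and $(\int_0^t s^{-2\delta}\diff{s})^{\nicefrac{1}{2}}$, the second of which is finite precisely when $\delta<\nicefrac{1}{2}$; a Gronwall argument then closes the bound.

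For the induction step the process $(\tfrac{d^n}{dx^n}[X_t^x]_{\P,\mathcal{B}(H)})(u_1,\dots,u_n)$ solves a linear SEE of the same shape whose inhomogeneities are, by the Fa\`{a} di Bruno / Leibniz expansion, sums over partitions $\{I_1,\dots,I_m\}$ of $\{1,\dots,n\}$ of terms $F^{(m)}(X_s^x)$ or $B^{(m)}(X_s^x)$ evaluated at the lower-order derivative processes in the directions indexed by the $I_j$'s. Since $F$ and $B$ have bounded derivatives, and since the inductive bounds apply to each block because $\sum_{i\in I_j}\delta_i\le\sum_{i=1}^n\delta_i<\nicefrac{1}{2}$, H\"{o}lder's inequality (splitting the exponent $p$ among the factors) gives every such inhomogeneity the $\lpnb{p}{\P}{H}$-singularity $s^{-\sum_{i=1}^n\delta_i}$, independently of the partition. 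Inserting this into the stochastic convolution again yields $(\int_0^t s^{-2\sum_{i=1}^n\delta_i}\diff{s})^{\nicefrac{1}{2}}$, finite exactly when $\sum_{i=1}^n\delta_i<\nicefrac{1}{2}$, and Gronwall closes the step. This is where the threshold $\nicefrac{1}{2}$ is forced: it is the square-integrability exponent dictated by the It\^{o} isometry for the diffusion term, whereas the drift only ever requires $\sum_i\delta_i<1$.

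I expect the main obstacle to be the bookkeeping of the higher-order variational equations---setting up a single induction that simultaneously controls all derivative orders $\le n$, all admissible splittings of $p$ through H\"{o}lder's inequality, and the uniformity of the constants in $x\in H$ and in $u_1,\dots,u_n\in\nzspace{H}$---rather than any individual analytic estimate. All of this is precisely what Theorem~1.1 in~\cite{AnderssonJentzenKurniawan2016a} packages, so the formal argument reduces to the verification of its hypotheses sketched in the first paragraph.
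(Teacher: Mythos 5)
Your proposal is correct and takes essentially the same route as the paper: the paper offers no independent proof of Corollary~\ref{cor:positive.result}, but simply notes that it is included as a special case of Theorem~1.1 in Andersson et al.~\cite{AnderssonJentzenKurniawan2016a}, which is exactly your reduction. Your additional sketch of the induction behind item~\eqref{item:derivative.strong.bound} (variational equations, It\^{o} isometry forcing the threshold $\nicefrac{1}{2}$, Gronwall) correctly describes the mechanism inside the cited theorem, but it plays no logical role once the hypotheses of that theorem are verified.
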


Item~(iv) of Theorem~1.1 in Andersson et al.~\cite{AnderssonJentzenKurniawan2016a}
and item~\eqref{item:derivative.strong.bound} of Corollary~\ref{cor:positive.result} in this paper, respectively, prove that the condition 
\begin{equation}
\label{eq:delta.condition}
  \sum^n_{i=1}
  \delta_i
  < \nicefrac{1}{2}
\end{equation}
for the regularity parameters 
$
  \delta_1,\delta_2,\ldots,\delta_n
  \in [0,\nicefrac{1}{2})
$
of the considered negative Sobolev-type spaces is sufficient to ensure that the left-hand side of~\eqref{eq:derivative.strong.bound} is finite.
The main result of this work (see Corollary~\ref{cor:negative.result} below and Theorem~\ref{lem:blowup} in Subsection~\ref{sec:disprove} below, respectively) reveals that this condition can essentially not be relaxed.
More specifically, Theorem~\ref{lem:blowup}  in Subsection~\ref{sec:disprove} below directly implies the following result.

\begin{corollary}
	\label{cor:negative.result}
	For every real number $T\in(0,\infty)$,
	every infinite dimensional separable $\R$-Hilbert space
	$ ( H, \left\| \cdot \right\|_H, \langle \cdot , \cdot \rangle_H ) $,
	every nontrivial separable $\R$-Hilbert space 
	$ ( U, \left\| \cdot \right\|_U, \langle \cdot , \cdot \rangle_U ) $, 
	every probability space $(\Omega,\mathcal{F},\P)$, 
	every normal filtration
	$(\mathcal{F}_t)_{t\in[0,T]}$ on $(\Omega,\mathcal{F},\P)$, 
		and every $\operatorname{Id}_U$-cylindrical 
		$ ( \Omega , \mathcal{F}, \P, ( \mathcal{F}_t )_{ t \in [0,T] } ) $-Wiener process $(W_t)_{t\in[0,T]}$ 
	there exist a generator $A\colon D(A)\subseteq H \to H$ of a strongly continuous analytic semigroup with 
	$
	\operatorname{spectrum}(A)
	\subseteq
	\{z\in\mathbb{C}\colon\operatorname{Re}(z)<0\}
	$
	and infinitely often Fr\'{e}chet differentiable functions 
	$ F \colon H \to H  $ 
	and 
	$ B \colon H \to HS(U,H) $
	with globally bounded derivatives 
	such 
	\begin{enumerate}[(i)]
		\item
		that there exist up-to-modifications unique
		$ ( \mathcal{F}_t )_{ t \in [0,T] } $/$ \mathcal{B}(H) $-predictable stochastic processes
		$ X^x \colon 
		$
		$
		[0,T] \times \Omega \to H $, 
		$ x \in H $, 
		which fulfill for all
		$ x \in H $, 
		$ p \in [2,\infty) $, 
		$ t \in [0,T] $
		that
		$
		\int^t_0
		\|e^{(t-s)A} F(X^x_s)\|_H
		+
		\|e^{(t-s)A} B(X^x_s)\|^2_{HS(U,H)}
		\, ds
		< \infty
		$, 
		$
		\sup_{ s \in [0,T] }
		\E\big[\|
		X_s^x
		\|^p_H
		\big]
		< \infty
		$, 
		and
		\begin{equation}
		\label{eq:SEE.negative}
		\begin{split}
		&
		[
		X_t^x
		-
		e^{tA} x
		]_{ \P, \mathcal{B}(H) }
		=
		\left[
		\int_0^t
		e^{ ( t - s ) A }
		F(X_s^x)
		\, \diffns s
		\right]_{ \P, \mathcal{B}(H) }
		+
		\int_0^t
		e^{ ( t - s ) A }
		B(X_s^x)
		\, \diffns W_s
		,
		\end{split}
		\end{equation}
		\item
		that it holds for all 
		$ p \in [2,\infty) $, 
		$ t \in [0,T] $ 
		that 
		$
		H \ni x \mapsto [X^x_t]_{\P,\mathcal{B}(H)} \in \lpnb{p}{\P}{H}
		$ 
		is infinitely often Fr\'{e}chet differentiable,
		\item
		that it holds for all 
		$ p \in [2,\infty) $, 
		$ n \in \N $, 
		$q\in[0,\infty)$, 
		$ \delta_1, \delta_2, \ldots, \delta_n \in [0,\nicefrac{1}{2}) $, 
		$ t \in (0,T] $
		with 
		$
		\sum^n_{i=1}
		\delta_i
		< \nicefrac{1}{2}
		$
		that 
		\begin{equation}
		\sup_{ x \in H }
		\sup_{ u_1,u_2,\ldots,u_n \in \nzspace{H} }
		\left[
		\frac{
			\big(
			\E\big[\|
			(-A)^{-q} 
			(\frac{d^n}{dx^n}[X^x_t]_{\P,\mathcal{B}(H)})(u_1,u_2,\ldots,u_n)
			\|^p_H\big]
			\big)^{\nicefrac{1}{p}}
		}{  
			\prod_{ i = 1 }^n
			\left\| (-A)^{-\delta_i} u_i \right\|_H
		}
		\right]
		<
		\infty
		,
		\end{equation}
		and
		\item
		\label{item:intro.regularity}
that it holds for all 
$ p \in [2,\infty) $, 
$ n \in \N $, 
$q\in[0,\infty)$, 
$ \delta_1, \delta_2, \ldots, \delta_n \in \R $, 
$ t \in (0,T] $ 
with 
$
\sum^n_{i=1}
\delta_i
> \nicefrac{1}{2}
$
that 
\begin{equation}
\sup_{ x,u_1,u_2,\ldots,u_n \in \nzspace{(\cap_{r\in\R}H_r)} }
\left[
\frac{
	\big(
	\E\big[\|
	(-A)^{-q}
	(\frac{d^n}{dx^n}[X^x_t]_{\P,\mathcal{B}(H)})(u_1,u_2,\ldots,u_n)
	\|^p_H\big]
	\big)^{\nicefrac{1}{p}}
}{  
\prod_{ i = 1 }^n
\left\| (-A)^{-\delta_i} u_i \right\|_H
}
\right]
=
\infty
.
\end{equation}
\end{enumerate}
\end{corollary}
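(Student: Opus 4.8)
The plan is to exhibit one explicit triple $(A,F,B)$ for which items~(i)--(iii) are inherited verbatim from Corollary~\ref{cor:positive.result} and for which the derivative processes can be computed in closed form, so that the blow-up in item~\eqref{item:intro.regularity} reduces to a one-line scaling estimate. Fix an orthonormal basis $(e_k)_{k\in\N}$ of $H$, a unit vector $f\in U$, and take $F=0$ together with the rank-one diffusion $B(x)v=\scal{v}{f}_U\,b(x)$, where $b(x)=\sum_{k\in\N}g(\scal{x}{e_k}_H)\,e_{\sigma(k)}$, $g(s)=\sin(s)+1-\cos(s)$, and $\sigma\colon\N\to\N$ is an injection chosen below. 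Since $g(0)=0$ and $\abs{g(s)}\le C\min\{\abs{s},1\}$ one checks $b(x)\in H$, and since every $g^{(n)}$ is bounded the map $b$, hence $B$, is infinitely often Fr\'echet differentiable with globally bounded derivatives; choosing $A$ self-adjoint and diagonal with $Ae_k=-\lambda_ke_k$, $\lambda_k\ge1$, makes $A$ a generator of a strongly continuous analytic semigroup with $\operatorname{spectrum}(A)\subseteq\{\operatorname{Re}<0\}$. Feeding $(A,0,B)$ into Corollary~\ref{cor:positive.result} yields items~(i)--(iii) directly, so all remaining effort goes into item~\eqref{item:intro.regularity}.

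The structurally important features are: $g^{(n)}(0)\in\{1,-1\}$ for \emph{every} $n\in\N$ (so all derivative orders are switched on by the \emph{same} $B$), and two disjoint families of input modes $(a_m)_{m\in\N}$ and target modes $(b_m)_{m\in\N}$ with $\sigma(a_m)=b_m$, $a_m\notin\sigma(\N)$, $\lambda_{a_m}\to\infty$, and $\lambda_{b_m}=1$. I would run the computation at the base point $x=0$, where $X^0\equiv0$ solves the equation and $\beta_t:=\scal{W_t}{f}_U$ is a scalar Brownian motion. Writing $Y^{(n,m)}_t:=(\tfrac{d^n}{dx^n}[X^0_t]_{\P,\mathcal{B}(H)})(e_{a_m},\ldots,e_{a_m})$ and $Z^{(n,m)}_t:=\scal{Y^{(n,m)}_t}{e_{b_m}}_H$, the first observation is that, because $a_m\notin\sigma(\N)$, no mode feeds $e_{a_m}$ through $b$, so the $e_{a_m}$-coordinate of the $j$-th derivative process is purely deterministic, $\scal{Y^{(j,m)}_t}{e_{a_m}}=\1_{\{j=1\}}\,e^{-\lambda_{a_m}t}$. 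Substituting this into the Fa\`a~di~Bruno expansion of $D^n[b(X^0_s)](e_{a_m}^{\otimes n})$ annihilates every set partition containing a block of size $\ge2$, leaving only the all-singletons term; hence $Z^{(n,m)}$ solves the \emph{exact} scalar linear equation
\[
\mathrm{d}Z^{(n,m)}_t=-\lambda_{b_m}Z^{(n,m)}_t\,\mathrm{d}t+g^{(n)}(0)\,e^{-n\lambda_{a_m}t}\,\mathrm{d}\beta_t,\qquad Z^{(n,m)}_0=0,
\]
with no surviving homogeneous source either (the single-block partition also carries the vanishing $e_{a_m}$-factor).

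Solving by variation of constants gives a centred Gaussian with variance $\int_0^t e^{-2\lambda_{b_m}(t-s)}e^{-2n\lambda_{a_m}s}\,\mathrm{d}s\gtrsim_{t,n}(\lambda_{a_m})^{-1}$. Since $\lambda_{b_m}=1$, applying $(-A)^{-q}$ leaves this coordinate unchanged, so $\|(-A)^{-q}Y^{(n,m)}_t\|_H\ge\absb{Z^{(n,m)}_t}$ and, by the equivalence of Gaussian $L^p$- and $L^2$-norms, the numerator in~\eqref{item:intro.regularity} with all $u_i=e_{a_m}$ is $\gtrsim_{p,n,t}(\lambda_{a_m})^{-\nicefrac{1}{2}}$, while the denominator equals $\prod_{i=1}^n\|(-A)^{-\delta_i}e_{a_m}\|_H=(\lambda_{a_m})^{-\sum_{i}\delta_i}$. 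The quotient is therefore $\gtrsim(\lambda_{a_m})^{(\sum_{i}\delta_i)-\nicefrac{1}{2}}$, which tends to $\infty$ as $m\to\infty$ precisely because $\sum_{i}\delta_i>\nicefrac{1}{2}$ and $\lambda_{a_m}\to\infty$ (note the argument is insensitive to the signs of the $\delta_i$); the exclusion of $x=0$ from the supremum is removed by continuity of the derivative processes in the initial value.

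The main obstacle is not this final scaling estimate but the bookkeeping that produces the displayed scalar SDE: rigorously differentiating the mild solution $n$ times, justifying the Fa\`a~di~Bruno representation of $D^n[b(X^0_s)](e_{a_m}^{\otimes n})$, and verifying the two exact cancellations, namely the vanishing of the $e_{a_m}$-coordinate for all orders $\ge2$ and the vanishing of the homogeneous feed into $e_{b_m}$. Once these are established the blow-up is immediate, so I would spend most of the write-up on the derivative-process calculus and, secondarily, on checking the mild- and strong-type integrability needed to invoke Corollary~\ref{cor:positive.result} for this smooth, globally-bounded-derivative $B$.
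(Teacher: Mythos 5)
Your proposal is correct in substance, but it reaches the corollary by a genuinely different construction than the paper's. The paper (Section~\ref{sec:counter.setting} and Theorem~\ref{lem:blowup}) keeps the \emph{natural} operator --- $A$ diagonal with simple eigenvalues $\lambda_n=-cn^2$ --- and puts all the structure into the diffusion $B(v)=\sqrt{1+\|Pv\|^2_H}\,e_1$, whose derivatives (Lemma~\ref{lem:derivative.formula}) pair the input directions through inner products $\langle Pv_i,v_j\rangle_H$; the blow-up is then produced not by single-mode inputs but by inputs spread over $N$ modes, $v^{k,r}_{n,N}=(-A)^r\sum_{j=1}^Ne_{k+jn}$, the divergence coming from $\sum_{j,k=1}^N(j^2+k^2)^{-1}\to\infty$ as $N\to\infty$ while the denominators stay bounded (cf.~\eqref{eq:lb.integral}--\eqref{eq:denom.finite}). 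You instead tailor $A$ to the construction --- eigenvalue $1$ of infinite multiplicity on the target modes $e_{b_m}$, a sequence $\lambda_{a_m}\to\infty$ on the input modes --- and use a coordinatewise nonlinearity with a mode shift $\sigma$, so that the single-mode inputs $u_1=\dots=u_n=e_{a_m}$ already give the polynomial divergence $\lambda_{a_m}^{(\sum_i\delta_i)-1/2}$. Both routes rest on the same two pillars: the derivative-process SDEs of Andersson et al.\ (invoked by the paper in Lemma~\ref{lem:derivative.process}), and a structural cancellation making the relevant component of every derivative process an explicit Gaussian stochastic convolution (paper: $P\circ B^{(j)}=0$ together with~\eqref{eq:B.project.input}; you: the range of $b$ and its derivatives avoids $e_{a_m}$, so blocks of size $\geq 2$ are annihilated). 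What your route buys: the final estimate is a transparent scaling argument exhibiting $(\sum_i\delta_i)-1/2$ as the critical exponent, manifestly insensitive to the signs of the $\delta_i$. What it gives up: Theorem~\ref{lem:blowup} proves the blow-up for the fixed natural operator $\lambda_n=-cn^2$, which your construction, with its infinite-multiplicity eigenvalue, does not; your argument proves the corollary precisely because the corollary lets you choose $A$. Two points you should not gloss over in a full write-up: first, Fr\'{e}chet smoothness of $b(x)=\sum_kg(\langle x,e_k\rangle_H)e_{\sigma(k)}$ with globally bounded derivatives does hold, but only because the coordinate structure is atomic (so $\sum_k|\langle h,e_k\rangle_H|^4\leq\|h\|_H^4$ rescues the Taylor remainder bounds that fail for Nemytskii operators on non-atomic $L^2$-spaces); second, the exclusion of $x=0$ from the supremum, which your continuity argument does repair (for fixed directions the numerator is continuous in $x$ by item~(ii) and boundedness of $(-A)^{-q}$), and which the paper avoids altogether by evaluating at nonzero initial vectors.
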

\noindent Regularity results for Kolmogorov equations associated to SEEs of the form~\eqref{eq:SEE.positive} and~\eqref{eq:SEE.negative}, which are in some sense related to Corollaries~\ref{cor:positive.result} and~\ref{cor:negative.result}, can, e.g., be found in Debussche~\cite[Lemmas~4.4--4.6]{Debussche2011},
Wang \& Gan~\cite[Lemma~3.3]{WangGan2013_Weak_convergence}, 
Andersson \& Larsson~\cite[(4.2)--(4.3)]{AnderssonLarsson2015},
Br\'{e}hier~\cite[Propositions~5.1--5.2 and Lemma~5.4]{Brehier2014},
Wang~\cite[Lemma~3.3]{Wang2016481},
Andersson et al.~\cite[Theorem~3.3]{AnderssonHefterJentzenKurniawan2016}, 
and Brehier \& Debussche~\cite[Theorems~3.2--3.3 and Proposition~3.5]{BrehierDebussche2017}.

The remainder of this article is organized as follows.
In Section~\ref{sec:counterexample} we state and prove the main result of this paper; see Theorem~\ref{lem:blowup} in Subsection~\ref{sec:disprove} below.
In Subsection~\ref{sec:counter.setting} we present the drift and the diffusion coefficient functions that we use throughout Section~\ref{sec:counterexample}.
In Subsection~\ref{sec:counter} we derive an explicit representation of the considered diffusion coefficient function (see Lemma~\ref{lem:derivative.formula} in Subsection~\ref{sec:counter}). 
In Subsection~\ref{sec:derivative.process} we present explicit formulas for the solution and its derivatives of the SEE associated with the drift and diffusion coefficient functions considered in Subsection~\ref{sec:counter.setting} (see Lemma~\ref{lem:derivative.process} in Subsection~\ref{sec:derivative.process}).
In Subsection~\ref{sec:disprove} we employ Lemma~\ref{lem:derivative.formula} in Subsection~\ref{sec:counter} and Lemma~\ref{lem:derivative.process} in Subsection~\ref{sec:derivative.process} to prove the main result of this paper, Theorem~\ref{lem:blowup} in Subsection~\ref{sec:disprove}.
Corollary~\ref{cor:negative.result} above is an immediate consequence of Theorem~\ref{lem:blowup} in Subsection~\ref{sec:disprove}.

\section{Counterexamples to regularities for the derivative processes associated to stochastic evolution equations}
\label{sec:counterexample}

\subsection{Setting}
\label{sec:counter.setting}

Throughout this section we consider the following setting. 
For every set $A$ let $ \mathcal{P}(A) $ be the power set of $ A $
and let $ \#_A \in \N_0 \cup \{\infty\} $ 
be the number of elements of $ A $,
let 
$ \Pi_k \in
\mathcal{P}(\mathcal{P}(
\mathcal{P}( \N )
)) 
$, 
$ k \in \N_0 $,
be the sets which satisfy for all $ k \in \N $
that
$\Pi_0=\emptyset$
and
\begin{equation}
\Pi_k =
\left\{
A \subseteq \mathcal{P}( \N )
\colon
\left[
\emptyset \notin A
\right]
\wedge
\left[
\cup_{ a \in A }
a
=
\left\{ 1, 2, \dots, k \right\}
\right]
\wedge
\left[
\forall \, a, b \in A \colon
\left(
a \neq b
\Rightarrow
a \cap b = \emptyset
\right)
\right]
\right\}
\end{equation}
(see, e.g., (10) in Andersson et al.~\cite{AnderssonJentzenKurniawan2016a}),
let 
$ ( H, \left\| \cdot \right\|_H, \langle \cdot , \cdot \rangle_H ) $
be an $ \R $-Hilbert space,
let $ e = ( e_n )_{ n \in \N } \colon \N \to H $ be an orthonormal basis of $ H $, 
let $ \lambda = ( \lambda_n )_{ n \in \N } \colon \N \to \R $, 
$ P \colon H \to H $, 
and $ B \colon H \to H $ 
be functions which satisfy for all 
$ v \in H $
that
$ \sup_{ n \in \N } \lambda_n < 0 $, 
$
  P v = \sum_{ n = 2 }^{ \infty } \langle e_n, v \rangle_H e_n
$, 
and 
$
  B( v ) =
  \sqrt{
    1 + \| P v \|^2_H
  } \,
  e_1
$, 
let $ T \in (0,\infty) $, 
let 
$ ( \Omega, \mathcal{F}, \P ) $
be a probability space with a normal filtration 
$(\mathcal{F}_t)_{t\in[0,T]}$,
let $ W \colon [0,T] \times \Omega \to \R $
be a standard 
$ ( \Omega , \mathcal{F}, \P, ( \mathcal{F}_t )_{ t \in [0,T] } ) $-Brownian motion,
let $ A \colon D(A) \subseteq H \rightarrow H $ 
be the linear operator which satisfies
$
  D(A)
  =
  \{ 
    v \in H \colon 
    \sum^\infty_{n=1}
    \left|
    \lambda_n 
    \langle e_n,v \rangle_H
    \right|^2
    < \infty
  \}
$ 
and 
$
  \forall \, v \in D(A) \colon
  Av
  =
  \sum^\infty_{n=1}
  \lambda_n 
  \langle e_n,v\rangle_H
  e_n
$, 
let
$
  (
    H_r
    ,
    \left\| \cdot \right\|_{ H_r }
    ,
    \langle \cdot , \cdot \rangle_{ H_r }
  )
$,
$ r \in \R $,
be a family of interpolation spaces associated to
$
  - A
$
(cf., e.g., \cite[Section~3.7]{sy02}),
and 
for every $\mathcal{F}$/$\mathcal{B}(H)$-measurable function
$ X \colon \Omega\to H $
let 
$
  \stochval{X}
$
be the set given by
$
  \stochval{X}
  =
  \big\{ Y \colon \Omega \to H 
  \colon 
  ( Y \text{ is }\mathcal{F}\text{/}\mathcal{B}(H)\text{-}
$
$  
  \text{-measurable and } \P(X=Y) = 1  )
  \big\}
$.

\subsection{An explicit representation for the diffusion coefficient}
\label{sec:counter}

\begin{lemma} [Derivatives of the diffusion $B$]
\label{lem:derivative.formula}
Assume the setting in Section~\ref{sec:counter.setting}.
Then 
\begin{enumerate}[(i)]
\item 
\label{item:B.smoothness}
it holds that 
$
  B \colon H \to H
$ 
is infinitely often differentiable,
\item
\label{item:B.formula}
it holds for all 
$ n \in \N $, 
$
  v_0, v_1, \ldots, v_n \in H
$ 
that 
\begin{equation}
\label{eq:derivative.formula}
\begin{split}
&
  B^{(n)}( v_0 )( v_1,v_2,\ldots,v_n )
\\&=
  \Bigg(
  {\sum\limits_{
    \varpi \in \Pi_n
  }}
  \frac{
    \big[
      {\prod^{ \#_\varpi - 1 }_{ i=0 }}
      (1-2i)
    \big]
  \big[
  \prod\nolimits_{ I \in \varpi }
  \langle
    \mathbbm{1}_{\{1,2\}}( \#_I ) \, Pv_{\max(I) \1_{\{2\}}(\#_I)} 
    ,
    v_{ \min(I) }
  \rangle_H
  \big]
  }{
  \big[ 1 + \| Pv_0 \|^2_H \big]^{
    ( \#_\varpi - \nicefrac{1}{2} )
  }
  }
  \Bigg) \,
  e_1
  ,
\end{split}
\end{equation}
and 
\item
\label{item:B.derivative.bounded}
it holds for all $ n \in \N $ that 
$
  \sup_{v\in H}
  \|B^{(n)}(v)\|_{L^{(n)}(H,H)}
  < \infty
$.
\end{enumerate}
\end{lemma}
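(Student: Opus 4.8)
The plan is to write $B$ as a scalar-valued smooth map times the fixed vector $e_1$ and to reduce everything to a one-dimensional chain rule. I would define $g\colon H\to\R$ by $g(v)=\|Pv\|_H^2=\langle Pv,v\rangle_H$ (using that $P$ is a self-adjoint idempotent, so that $\|Pv\|_H^2=\langle P^2v,v\rangle_H=\langle Pv,v\rangle_H$) and $\psi\colon(-1,\infty)\to\R$ by $\psi(s)=\sqrt{1+s}$, so that $B(v)=\psi(g(v))\,e_1$. Since $g$ is a continuous quadratic form, its Fr\'echet derivatives are immediate: $g'(v_0)(v_1)=2\langle Pv_0,v_1\rangle_H$, $g''(v_0)(v_1,v_2)=2\langle Pv_2,v_1\rangle_H$, and $g^{(n)}\equiv0$ for all $n\geq3$. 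The scalar function $\psi$ satisfies $\psi^{(k)}(s)=\big[\prod_{i=0}^{k-1}(\tfrac12-i)\big](1+s)^{\frac12-k}=2^{-k}\big[\prod_{i=0}^{k-1}(1-2i)\big](1+s)^{\frac12-k}$ for all $k\in\N$, $s\in(-1,\infty)$. As $g(v_0)=\|Pv_0\|_H^2\geq0>-1$, the composition $\psi\circ g$ is well defined and infinitely often differentiable, which already yields item~\eqref{item:B.smoothness}.

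For item~\eqref{item:B.formula} I would apply the Fa\`a di Bruno formula for the composition of the maps $g$ and $\psi$ (this is where the set $\Pi_n$ of partitions of $\{1,\dots,n\}$ enters), namely
\begin{equation*}
B^{(n)}(v_0)(v_1,\dots,v_n)=\Big[\,\sum_{\varpi\in\Pi_n}\psi^{(\#_\varpi)}(g(v_0))\prod_{I\in\varpi}g^{(\#_I)}(v_0)\big((v_i)_{i\in I}\big)\Big]\,e_1
\end{equation*}
and then substitute the derivatives of $g$ and $\psi$ from the first step. Every block $I\in\varpi$ with $\#_I\geq3$ makes the corresponding factor vanish, which is exactly encoded by the indicator $\1_{\{1,2\}}(\#_I)$; a singleton block $\{j\}$ contributes $2\langle Pv_0,v_j\rangle_H$ and a pair block $\{j,k\}$ contributes $2\langle Pv_{\max\{j,k\}},v_{\min\{j,k\}}\rangle_H$, which together give the factor $2$ times the inner product $\langle\1_{\{1,2\}}(\#_I)\,Pv_{\max(I)\1_{\{2\}}(\#_I)},v_{\min(I)}\rangle_H$. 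Multiplying these $\#_\varpi$ factors of $2$ against the prefactor $2^{-\#_\varpi}$ hidden in $\psi^{(\#_\varpi)}(g(v_0))$ cancels the powers of two, and rewriting $(1+g(v_0))^{\frac12-\#_\varpi}=[1+\|Pv_0\|_H^2]^{-(\#_\varpi-\nicefrac12)}$ produces precisely~\eqref{eq:derivative.formula}. To keep the argument self-contained I would alternatively verify~\eqref{eq:derivative.formula} by induction on $n$, using that passing from $\Pi_n$ to $\Pi_{n+1}$ corresponds to either adjoining $\{n+1\}$ as a new singleton or inserting $n+1$ into an existing block, and that $g''$ is the last nonvanishing derivative of $g$.

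Finally, item~\eqref{item:B.derivative.bounded} would follow by estimating each summand in~\eqref{eq:derivative.formula}. Fix $\varpi\in\Pi_n$ and let $m$ denote its number of singleton blocks; since the summand vanishes unless all remaining blocks are pairs, counting elements gives $m+2(\#_\varpi-m)=n$, i.e.\ $m=2\,\#_\varpi-n$. Using the Cauchy--Schwarz inequality together with $\|Pv\|_H\leq\|v\|_H$ for all $v\in H$, each singleton factor is bounded by $\|Pv_0\|_H\,\|v_j\|_H$ and each pair factor by $\|v_j\|_H\,\|v_k\|_H$, so that the product of inner products is at most $\|Pv_0\|_H^{\,m}\prod_{i=1}^n\|v_i\|_H$. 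With $t:=\|Pv_0\|_H\geq0$ the $v_0$-dependent part of the summand is controlled by $t^{\,m}\,[1+t^2]^{-(\#_\varpi-\nicefrac12)}$, and since $0\leq m=2\,\#_\varpi-n\leq 2\,\#_\varpi-1$ one checks $t^{\,m}\leq[1+t^2]^{\#_\varpi-\nicefrac12}$ for all $t\geq0$ (separating the cases $t\in[0,1]$ and $t\geq1$), whence this factor is $\leq1$ uniformly in $v_0$. As $\Pi_n$ is finite and the constants $\prod_{i=0}^{\#_\varpi-1}(1-2i)$ are fixed, taking the supremum over unit vectors $v_1,\dots,v_n$ and over $v_0\in H$ yields a finite bound on $\sup_{v\in H}\|B^{(n)}(v)\|_{L^{(n)}(H,H)}$. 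The only genuinely delicate point is the bookkeeping in the second step: matching the output of Fa\`a di Bruno term-by-term to the compact indicator-function encoding of singletons versus pairs in~\eqref{eq:derivative.formula}, and confirming that the powers of two and the exponent $\#_\varpi-\nicefrac12$ come out exactly as stated.
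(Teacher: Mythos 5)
Your proposal is correct and follows essentially the same route as the paper: both write $B$ as a smooth scalar function of the quadratic functional $\|Pv\|_H^2$ multiplied by $e_1$, obtain the partition formula~\eqref{eq:derivative.formula} with the same singleton/pair bookkeeping (the paper by exactly the induction on $n$ via the recursive structure of $\Pi_{n+1}$ that you sketch as your fallback, you primarily by invoking the Banach-space Fa\`a di Bruno formula), and prove item~\eqref{item:B.derivative.bounded} by Cauchy--Schwarz together with the count $\#_{\{I\in\varpi\colon \#_I=1\}}=2\#_\varpi-n$ and an elementary bound on $t^{2\#_\varpi-n}[1+t^2]^{-(\#_\varpi-1/2)}$. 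The remaining differences are cosmetic: your splitting $\psi(s)=\sqrt{1+s}$, $g(v)=\|Pv\|_H^2$ versus the paper's $f(x)=\sqrt{x}$, $g(v)=1+\|Pv\|_H^2$, and your slightly cruder (but valid) estimate of the $v_0$-dependent factor, which yields the bound $1$ where the paper gets the decaying factor $[1+\|Pv_0\|_H^2]^{-(n-1)/2}$.
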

\begin{proof}
Throughout this proof let 
$ f \in C^\infty( (0,\infty), \R ) $ 
and 
$ g \in C^\infty( H, (0,\infty) ) $ 
be the functions which satisfy for all 
$ x \in (0,\infty) $, $ v \in H $ 
that 
\begin{equation}
  f(x) = \sqrt{x}
  \qquad\text{and}\qquad
  g(v) = 1+\| Pv \|^2_H
\end{equation}
and let
$
  I^\varpi_i
  \in
  \varpi
$, 
$i \in \{1,2,\ldots,\#_\varpi\} $, 
$ \varpi \in \Pi_n $, 
$ n \in \N $, 
be
the sets which satisfy for all 
$ n \in \N $, 
$ \varpi \in \Pi_n $
that
\begin{equation}
  \min( I^\varpi_1 ) < 
  \min( I_2^\varpi ) < \dots < 
  \min( 
    I_{ \#_\varpi }^{ \varpi } 
  )
  .
\end{equation}
Note that the fact that 
$
  \forall\, v \in H
  \colon
  B(v)
  =
  (f\circ g)(v) \, e_1
  =
  f(g(v))\,
  e_1
$ 
proves item~\eqref{item:B.smoothness}.

In the next step we prove~\eqref{eq:derivative.formula} by induction on $n\in\N$.
For the base case $ n=1 $ we note that for all 
$ v_0, v_1 \in H $ 
it holds that
\begin{equation}
\begin{split}
  B'(v_0) v_1
&=
  \big[
  (f \circ g)'(v_0) v_1 
  \big] e_1
  =
  \big[
  (f' \circ g)(v_0) g'(v_0) v_1 
  \big] e_1
\\&=
  \frac{1}{
    [1+\|P v_0\|^2_H]^{\nicefrac{1}{2}}
  }
  \langle P v_0, P v_1 \rangle_H \,
  e_1
=
  \frac{
  	\langle P v_0, v_1 \rangle_H
  	}{
  	[1+\|P v_0\|^2_H]^{\nicefrac{1}{2}}
  }\,
  e_1  
  .
\end{split}
\end{equation}
This and the fact that $\Pi_1 = \{\{\{1\}\}\}$ prove~\eqref{eq:derivative.formula} in the base case $n=1$. 
For the induction step $ \N \ni n \to n+1 \in \{2,3,\ldots\} $
assume that~\eqref{eq:derivative.formula} holds for some natural number $ n \in \N $.
Observe that item~\eqref{item:B.smoothness}, the induction hypothesis, and the product rule of differentiation ensure that for all 
$ v_0, v_1, \ldots, v_{n+1} \in H $ 
it holds that 
\begin{equation}
\begin{split}
&
  B^{(n+1)}( v_0 )( v_1,v_2,\ldots,v_{n+1} )
\\&=
  \big(
  \tfrac{d}{d v_0} 
  \big[
  B^{(n)}( v_0 )( v_1,v_2,\ldots,v_n ) 
  \big]\big)
  v_{n+1}
\\&=
  \Bigg(
  \sum_{
    \varpi \in \Pi_n
  }
    \left[
      {\smallprod^{ \#_\varpi - 1 }_{ j=0 }}
      (1-2j)
    \right]
  \Bigg(
  \frac{d}{d v_0}
  \Bigg[
  \frac{
  \prod_{ I \in \varpi }
  \langle
    \mathbbm{1}_{\{1,2\}}( \#_I ) \, Pv_{\max(I) \1_{\{2\}}(\#_I)} 
    ,
    v_{ \min(I) }
  \rangle_H
  }{
    [ 1 + \| Pv_0 \|^2_H ]^{
      ( \#_\varpi - \nicefrac{1}{2} )
    }
  }
  \Bigg]
  \Bigg) v_{n+1}
  \Bigg) \,
  e_1
\\&=
\Bigg(
\sum_{
	\substack{\varpi \in \Pi_n, \\ \forall \, I \in \varpi \colon \#_I\leq 2}
}
\left[
{\smallprod^{ \#_\varpi - 1 }_{ j=0 }}
(1-2j)
\right]
\Bigg(
\frac{d}{d v_0}
\Bigg[
\frac{
	\prod^{\#_\varpi}_{i=1}
	\langle
	Pv_{\max(I^\varpi_i) \1_{\{2\}}(\#_{I^\varpi_i})} 
	,
	v_{ \min(I^\varpi_i) }
	\rangle_H
}{
[ 1 + \| Pv_0 \|^2_H ]^{
	( \#_\varpi - \nicefrac{1}{2} )
}
}
\Bigg]
\Bigg) v_{n+1}
\Bigg) \,
e_1
\\&=
\Bigg(
\sum_{
	\substack{\varpi \in \Pi_n, \\ \forall \, I \in \varpi \colon \#_I\leq 2}
}
\Bigg\{
\frac{
	\big[
	\prod^{ \#_\varpi }_{ j=0 }
	(1-2j)
	\big]
	\langle P v_0, P v_{n+1} \rangle_H
	\big[
	\prod^{ \#_\varpi }_{ i=1 }
	\langle
	Pv_{\max(I^\varpi_i) \1_{\{2\}}(\#_{I^\varpi_i})} 
	,
	v_{ \min(I^\varpi_i) }
	\rangle_H
	\big]
}{
[ 1 + \| Pv_0 \|^2_H ]^{
	( \#_\varpi + \nicefrac{1}{2} )
}
}
\\&\quad+
\frac{
	\big[\prod^{\#_\varpi-1}_{j=0}(1-2j)\big]
}{
[1+\|P v_0\|^2_H]^{(\#_\varpi-\nicefrac{1}{2})}
}
\Bigg(
\frac{d}{dv_0}
\Bigg[
\prod^{\#_\varpi}_{i=1}
\langle
Pv_{\max(I^\varpi_i) \1_{\{2\}}(\#_{I^\varpi_i})} 
,
v_{ \min(I^\varpi_i) }
\rangle_H	
\Bigg]
\Bigg)
v_{n+1}
\Bigg\}
\Bigg)
e_1
  .
\end{split}
\end{equation}
Hence, we obtain that for all 
$ v_0, v_1, \ldots, v_{n+1} \in H $
it holds that 
\begin{equation}
\begin{split}
&
B^{(n+1)}( v_0 )( v_1,v_2,\ldots,v_{n+1} )
\\&=
\Bigg(
\sum_{
	\substack{\varpi \in \Pi_n, \\ \forall \, I \in \varpi \colon \#_I\leq 2}
}
\Bigg\{
\frac{
	\big[
	\prod^{ \#_\varpi }_{ j=0 }
	(1-2j)
	\big]
	\langle P v_0, P v_{n+1} \rangle_H
	\big[
	\prod_{ I \in \varpi }
	\langle
	Pv_{\max(I) \1_{\{2\}}(\#_I)} 
	,
	v_{ \min(I) }
	\rangle_H
	\big]
}{
[ 1 + \| Pv_0 \|^2_H ]^{
	( \#_\varpi + \nicefrac{1}{2} )
}
}
\\&\quad+
\sum_{i\in\{1,2,\ldots,\#_\varpi\}}  
\frac{
	\big[
	\prod^{ \#_\varpi - 1 }_{ j=0 }
	(1-2j)
	\big]
}{
[ 1 + \| Pv_0 \|^2_H ]^{
	( \#_\varpi - \nicefrac{1}{2} )
}
}\,
\langle
\1_{\{2\}}(\#_{I^\varpi_i\cup\{n+1\}}) \, Pv_{n+1} 
,
v_{ \min(I^\varpi_i) }
\rangle_H
\\&\quad\cdot
\prod_{ j \in \{1,2,\ldots,\#_\varpi\}\setminus\{i\} }
\langle
Pv_{\max(I^\varpi_j) \1_{\{2\}}(\#_{I^\varpi_j})} 
,
v_{ \min(I^\varpi_j) }
\rangle_H
\Bigg\}
\Bigg) \, e_1
\\&=
\Bigg(
\sum_{
	\substack{\varpi \in \Pi_n}
}
\Bigg\{
\frac{
	\big[
	\prod^{ \#_{ \varpi \cup \{ \{n+1\} \} }-1 }_{ j=0 }
	(1-2j)
	\big]
	\big[
	\prod_{ I \in \varpi \cup \{ \{n+1\} \} }
	\langle
	\1_{\{1,2\}}(\#_I)
	Pv_{\max(I) \1_{\{2\}}(\#_I)} 
	,
	v_{ \min(I) }
	\rangle_H
	\big]
}{
[ 1 + \| Pv_0 \|^2_H ]^{
	( \#_{ \varpi \cup \{ \{n+1\} \} } - \nicefrac{1}{2} )
}
}
\\&\quad+
\sum_{i\in\{1,2,\ldots,\#_\varpi\}}  
\frac{
	\big[
	\prod^{ \#_\varpi - 1 }_{ j=0 }
	(1-2j)
	\big]
}{
[ 1 + \| Pv_0 \|^2_H ]^{
	( \#_\varpi - \nicefrac{1}{2} )
}
}\,
\langle
\1_{\{1,2\}}(\#_{I^\varpi_i\cup\{n+1\}}) \, Pv_{n+1} 
,
v_{ \min(I^\varpi_i) }
\rangle_H
\\&\quad\cdot
\prod_{ I \in \varpi\setminus\{I^\varpi_i\} }
\langle
\1_{\{1,2\}}(\#_I)
Pv_{\max(I) \1_{\{2\}}(\#_{I})} 
,
v_{ \min(I) }
\rangle_H
\Bigg\}
\Bigg) \, e_1
.
\end{split}
\end{equation}
This implies that for all 
$ v_0, v_1, \ldots, v_{n+1} \in H $ 
it holds that 
\begin{equation}
\begin{split}
&
  B^{(n+1)}( v_0 )( v_1,v_2,\ldots,v_{n+1} )
\\&=
  \Bigg(
  \sum_{
    \varpi \in \Pi_n
  }
  \Bigg\{
    \sum_{
    \substack{\Xi \in \Pi_{n+1}, \\ \Xi = \varpi \cup \{\{n+1\}\}}
    }
    \Bigg[
    \frac{
      \big[
      \prod^{ \#_\Xi - 1 }_{ i=0 }
      (1-2i)
      \big]
  \big[
  \prod_{ I \in \Xi }
  \langle
    \mathbbm{1}_{\{1,2\}}( \#_I ) \, Pv_{\max(I) \1_{\{2\}}(\#_I)} 
    ,
    v_{ \min(I) }
  \rangle_H
  \big]
    }{
  [ 1 + \| Pv_0 \|^2_H ]^{
    ( \#_\Xi - \nicefrac{1}{2} )
  }
    }
  \Bigg]
\\&+
  \sum_{\substack{ \Xi \in \Pi_{n+1}, \, i \in \{1,2,\ldots,\#_\varpi\}, \\ 
  \Xi = (\varpi\setminus\{I^\varpi_i\})\cup\{I^\varpi_i\cup\{n+1\}\}}}
  \Bigg[
    \frac{
      \big[
      \prod^{ \#_\Xi - 1 }_{ i=0 }
      (1-2i)
      \big]
  \big[
  \prod_{ I \in \Xi }
  \langle
    \mathbbm{1}_{\{1,2\}}( \#_I ) \, Pv_{\max(I) \1_{\{2\}}(\#_I)} 
    ,
    v_{ \min(I) }
  \rangle_H
  \big]
    }{
  [ 1 + \| Pv_0 \|^2_H ]^{
    ( \#_\Xi - \nicefrac{1}{2} )
  }
    }
  \Bigg]
  \Bigg\}
  \Bigg) \,
  e_1.
\end{split}
\end{equation}
Combining this with the fact that 
\begin{equation}
\label{eq:recursive.rep}
\begin{split}
&
\Pi_{n+1}
=
\Big\{ 
\varpi \cup \big\{\{n+1\}\big\}
\colon 
\varpi \in \Pi_n 
\Big\}
\\&
\biguplus
\Big\{
\big\{
I^\varpi_1, I^\varpi_2, \ldots, I^\varpi_{i-1}, 
I^\varpi_i \cup \{n+1\}, I^\varpi_{i+1}, I^\varpi_{i+2}, 
\ldots, I^\varpi_{\#_\varpi}
\big\}
\colon
i \in \{1,2,\ldots,\#_\varpi\}, \,
\varpi \in \Pi_n
\Big\}
\end{split}
\end{equation}
proves~\eqref{eq:derivative.formula} in the case $n+1$. 
Induction therefore establishes item~\eqref{item:B.formula}.

It thus remains to prove item~\eqref{item:B.derivative.bounded}.
For this we note that for all 
$ n \in \N $, 
$ \varpi \in \Pi_n $ 
with 
$ \forall \, I \in \varpi \colon \#_I \leq 2 $ 
it holds that 
\begin{equation}
\label{eq:counting.singleton}
  \#_{\{
  	I \in \varpi \colon \#_I = 1
  	\}}
  =
  2\#_\varpi-n
  .
\end{equation}
Next observe that the Cauchy-Schwarz inequality and~\eqref{eq:derivative.formula} ensure that for all 
$ n \in \N $, 
$ v_0, v_1, \ldots, v_n \in H $ 
it holds that 
\begin{equation}
\label{eq:Bnorm.CS}
\begin{split}
&  \| B^{(n)}( v_0 )( v_1,v_2,\ldots,v_n ) \|_{ H }
\\&=
  \Bigg\|
  \Bigg(
  {\sum\limits_{
  		\varpi \in \Pi_n
  	}}
  	\frac{
  		\big[
  		{\prod^{ \#_\varpi - 1 }_{ i=0 }}
  		(1-2i)
  		\big]
  		\big[
  		\prod\nolimits_{ I \in \varpi }
  		\langle
  		\mathbbm{1}_{\{1,2\}}( \#_I ) \, Pv_{\max(I) \1_{\{2\}}(\#_I)} 
  		,
  		v_{ \min(I) }
  		\rangle_H
  		\big]
  	}{
  	\big[ 1 + \| Pv_0 \|^2_H \big]^{
  		( \#_\varpi - \nicefrac{1}{2} )
  	}
  }
  \Bigg) \,
  e_1
  \Bigg\|_H
\\&=
  \Bigg|
  {\sum_{
    \varpi \in \Pi_n, \,
    \forall\, I \in \varpi \colon \#_{I} \leq 2
  }}
  \frac{
    \big[
      {\prod^{ \#_\varpi - 1 }_{ i=0 }}
      (1-2i)
    \big]
  \big[
  \prod\nolimits_{ I \in \varpi }
  \langle
    Pv_{\max(I) \1_{\{2\}}(\#_I)} 
    ,
    Pv_{ \min(I) }
  \rangle_H
  \big]
  }{
  \big[ 1 + \| Pv_0 \|^2_H \big]^{
    ( \#_\varpi - \nicefrac{1}{2} )
  }
  }
  \Bigg|
\\&\leq
  {\sum_{
    \varpi \in \Pi_n, \,
    \forall\, I \in \varpi \colon \#_{I} \leq 2
  }}
  \frac{
    \big|
      {\prod^{ \#_\varpi - 1 }_{ i=0 }}
      (1-2i)
    \big|
  \prod\nolimits_{ I \in \varpi }
  \big[
  \|
    Pv_{\max(I) \1_{\{2\}}(\#_I)} 
  \|_H \,
  \|
    Pv_{ \min(I) }
  \|_H
  \big]
  }{
  \big[ 1 + \| Pv_0 \|^2_H \big]^{
    ( \#_\varpi - \nicefrac{1}{2} )
  }
  }
.
\end{split}
\end{equation}
Moreover, the fact that 
$
  \forall \, n \in \N, \, \varpi \in \Pi_n
  \colon
  \cup_{I\in\varpi} I
  =
  \{1,2,\ldots,n\} 
$
implies that for all 
$ n \in \N $, 
$ \varpi \in \Pi_n $, 
$ v_0, v_1, \ldots, v_n \in H $ 
with 
$
  \forall\, I \in \varpi \colon \#_{I} \leq 2
$
it holds that 
\begin{equation}
\begin{split}
&
\prod\nolimits_{ I \in \varpi }
\big[
\|
Pv_{\max(I) \1_{\{2\}}(\#_I)} 
\|_H \,
\|
Pv_{ \min(I) }
\|_H
\big]
\\&=
\bigg(
\prod\nolimits_{\substack{ I \in \varpi,\\ \#_I = 1 }}
\big[
\|
Pv_0 
\|_H \,
\|
Pv_{ \min(I) }
\|_H
\big]
\bigg) \,
\bigg(
\prod\nolimits_{\substack{ I \in \varpi,\\ \#_I = 2 }}
\big[
\|
Pv_{\max(I)} 
\|_H \,
\|
Pv_{ \min(I) }
\|_H
\big]
\bigg)
\\&=
\bigg(
\prod\nolimits_{\substack{ I \in \varpi,\\ \#_I = 1 }}
\|
Pv_0 
\|_H
\bigg) \,
\bigg\{
\bigg(
\prod\nolimits_{\substack{ I \in \varpi,\\ \#_I = 1 }}
\|
Pv_{\min(I)} 
\|_H
\bigg) \,
\bigg(
\prod\nolimits_{\substack{ I \in \varpi,\\ \#_I = 2 }}
\big[
\|
Pv_{\max(I)} 
\|_H \,
\|
Pv_{ \min(I) }
\|_H
\big]
\bigg)
\bigg\}
\\&=
\|Pv_0\|^{ \#_{\{I\in\varpi\colon\#_I=1\}} }_H \,
\prod^{ n }_{ i=1 }
\| Pv_i \|_H
.
\end{split}
\end{equation}
This, \eqref{eq:counting.singleton}, and~\eqref{eq:Bnorm.CS} show that for all 
$ n \in \N $, 
$ v_0, v_1, \ldots, v_n \in H $ 
it holds that 
\begin{equation}
\begin{split}
&  \| B^{(n)}( v_0 )( v_1,v_2,\ldots,v_n ) \|_{ H }
\\&\leq
\sum_{
	\varpi \in \Pi_n, \,
	\forall\, I \in \varpi \colon \#_{I} \leq 2
}
\left|
\prod^{ \#_\varpi - 1 }_{ i=0 }
(1-2i)
\right|
\frac{
	\|Pv_0\|^{ \#_{\{I\in\varpi\colon\#_I=1\}} }_H
}{
	[ 1 + \| Pv_0 \|^2_H ]^{
		( \#_\varpi - 1/2 )
	}
} 
\left[
\prod^{ n }_{ i=1 }
\| Pv_i \|_H
\right]
\\&=
\sum_{
	\varpi \in \Pi_n, \,
	\forall\, I \in \varpi \colon \#_{I} \leq 2
}
\left|
\prod^{ \#_\varpi - 1 }_{ i=0 }
(1-2i)
\right|
\frac{
	\|Pv_0\|^{ (2\#_\varpi - n) }_H
}{
	[ 1 + \| Pv_0 \|^2_H ]^{
		( \#_\varpi - 1/2 )
	}
} 
\left[
\prod^{ n }_{ i=1 }
\| Pv_i \|_H
\right]
.
\end{split}
\end{equation}
The fact that 
$
  \forall \, v \in H 
  \colon
  \|Pv\|_H \leq \|v\|_H
$
therefore implies that for all 
$ n \in \N $
it holds that 
\begin{equation}
\begin{split}
&
  \sup_{ v \in H }
  \| B^{(n)}( v ) \|_{ L^{(n)}( H, H ) }
\\&\leq
  \sum_{
    \varpi \in \Pi_n, \,
    \forall\, I \in \varpi \colon \#_{I} \leq 2
  }
          \left|
            \prod^{ \#_\varpi - 1 }_{ i=0 }
            (1-2i)
          \right|
  \sup_{ v \in H }
  \bigg[
  \frac{
    \|Pv\|^{ (2\#_\varpi - n) }_H
  }{
  [ 1 + \| Pv \|^2_H ]^{
    ( \#_\varpi - 1/2 )
  }
  }
  \bigg]
\\&\leq
\sum_{
	\varpi \in \Pi_n, \,
	\forall\, I \in \varpi \colon \#_{I} \leq 2
}
\left|
\prod^{ \#_\varpi - 1 }_{ i=0 }
(1-2i)
\right|
\sup_{ v \in H }
\bigg[
\frac{
	[1+\|Pv\|^2_H]^{ (\#_\varpi - \nicefrac{n}{2}) }
}{
[ 1 + \| Pv \|^2_H ]^{
	( \#_\varpi - 1/2 )
}
}
\bigg]
\\&=
  \sum_{
    \varpi \in \Pi_n, \,
    \forall\, I \in \varpi \colon \#_{I} \leq 2
  }
          \left|
            \prod^{ \#_\varpi - 1 }_{ i=0 }
            (1-2i)
          \right|
    \sup_{ v \in H }
    \left[
  \frac{
    1
  }{
  [ 1 + \| Pv \|^2_H ]^{
    ( n-1 )/2
  }
  }
  \right]
\\&\leq
  \sum_{
    \varpi \in \Pi_n, \,
    \forall\, I \in \varpi \colon \#_{I} \leq 2
  }
            \prod^{ \#_\varpi - 1 }_{ i=0 }
            |2i-1|
\leq
  \sum_{\varpi\in\Pi_n}
  [2\#_\varpi]^{\#_\varpi}
    .
\end{split}
\end{equation}
This and the fact that 
$
  \forall \, n \in \N, \, \varpi \in \Pi_n
  \colon
  \#_{\Pi_n}+\#_\varpi < \infty
$
establish item~\eqref{item:B.derivative.bounded}.
The proof of Lemma~\ref{lem:derivative.formula} is thus completed.
\end{proof}

\subsection{Explicit representations for the derivative processes}
\label{sec:derivative.process}

\begin{lemma}[Exact formulas of derivative processes]
\label{lem:derivative.process}
Assume the setting in Section~\ref{sec:counter.setting}. 
Then 
\begin{enumerate}[(i)]
\item
\label{item:base.existence}
there exist up-to-modifications unique
$ ( \mathcal{F}_t )_{ t \in [0,T] } $/$ \mathcal{B}(H) $-predictable stochastic processes
$ X^{0,x} \colon 
$
$
[0,T] \times \Omega \to H $, 
$ x \in H $, 
which fulfill for all
$ p \in [2,\infty) $, 
$ x \in H $, 
$ t \in [0,T] $
that
$
  \sup_{ s \in [0,T] }
  \E\big[
  \|
    X_s^{0,x}
  \|^p_H
  \big]
  < \infty
$
and
\begin{equation}
\label{eq:blow.SEE}
\begin{split}
&
  \stochval{X_t^{0,x}
  	-
  	e^{tA} x}
  =
  \int_0^t
    e^{ ( t - s ) A }
      B(X_s^{0,x})
  \, \diffns W_s
  ,
\end{split}
\end{equation}
\item
\label{item:existence.derivatives}
it holds for all 
$ p \in [2,\infty) $, 
$ t \in [0,T] $ 
that 
$
  H \ni x \mapsto \stochval{X^{0,x}_t} \in \lpnb{p}{\P}{H}
$ 
is infinitely often Fr\'{e}chet differentiable,
\item
\label{item:derivative.predictable}
there exist up-to-modifications unique
$ ( \mathcal{F}_t )_{ t \in [0,T] } $/$ \mathcal{B}(H) $-predictable stochastic processes
$
  X^{n,\mathbf{u}} \colon 
$
$
  [0,T] \times \Omega \to H
$, 
$
  \mathbf{u} \in H^{n+1}
$, 
$
  n \in \N
$,
which fulfill for all 
$ p \in [2,\infty) $, 
$
  n \in \N
$, 
$ \mathbf{u} \in H^n $, 
$
  x \in H
$, 
$ t \in [0,T] $ 
that 
\begin{equation}
\label{eq:SEE.derivatives}
\begin{split}
&
  \big(
  \tfrac{d^n}{dx^n}
  \stochval{X^{0,x}_t}
  \big) \mathbf{u}
  =
  \big(
    H \ni y \mapsto \stochval{X^{0,y}_t} \in \lpnb{p}{\P}{H}
  \big)^{(n)}(x) \, \mathbf{u}
  =
  \stochval{X^{n,(x,\mathbf{u})}_t}
  ,
\end{split}
\end{equation}
\item
\label{item:lem.regularity}
it holds for all 
$ p \in [2,\infty) $, 
$ n \in \N $, 
$ \delta_1,\delta_2,\ldots,\delta_n \in [0,\infty) $, 
$ t \in (0,T] $ 
with 
$ \sum^n_{i=1} \delta_i < \nicefrac{1}{2} $
that
\begin{equation}
\sup_{ \mathbf{u} = (u_0,u_1,\ldots,u_n) \in H \times (\nzspace{H})^n }
\frac{
	\big(\E\big[
	\| X^{ n,\mathbf{u} }_t \|^p_H
	\big]\big)^{\nicefrac{1}{p}}
}{
\prod^{ n }_{ i=1 }
\|u_i\|_{H_{-\delta_i}}
}
< \infty,
\end{equation}
and  
\item
\label{item:derivative.solution}
it holds for all 
$ n \in \N_0 $, 
$ \mathbf{u} = ( u_0, u_1, \ldots, u_n ) \in H^{ n+1 } $, 
$ t \in [0,T] $ 
that 
\begin{equation}
\label{eq:exact.solution}
\begin{split}
&
  \stochval{X_t^{n,\mathbf{u}}
  	-
  	\mathbbm{1}_{ \{ 0, 1 \} }(n) \, e^{tA} u_n  }
  =
  \int_0^t
    e^{ ( t - s ) A }
      B^{ ( n ) }( e^{sA} u_0 )
      \big(
        e^{sA} u_1
        ,
        e^{sA} u_2
        ,
        \dots
        ,
        e^{sA} u_n
      \big)
  \, \diffns W_s
  .
\end{split}
\end{equation}
\end{enumerate}
\end{lemma}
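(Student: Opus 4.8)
The plan rests on one structural observation: since the diffusion $B$ takes values in $\R e_1$ while $P$ annihilates $e_1$, one has $PB\equiv 0$. First I would invoke the standard existence and uniqueness theory for mild solutions of SEEs with a globally Lipschitz diffusion coefficient --- here there is no drift and $B$ has globally bounded first derivative by item~\eqref{item:B.derivative.bounded} of Lemma~\ref{lem:derivative.formula}, hence is globally Lipschitz --- to obtain item~\eqref{item:base.existence} together with the stated moment bounds. Applying the projection $P$ (which commutes with $e^{tA}$) to the mild equation~\eqref{eq:blow.SEE} and using $PB(\cdot)=0$ then yields $PX^{0,x}_t = e^{tA}Px$ up to modifications, i.e.\ $\|PX^{0,x}_s\|_H = \|Pe^{sA}x\|_H$ is deterministic. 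Consequently $B(X^{0,x}_s)=B(e^{sA}x)$ almost surely, and the mild solution collapses to the explicit Gaussian representation $\stochval{X^{0,x}_t}=\stochval{e^{tA}x+\int_0^t e^{(t-s)A}B(e^{sA}x)\diff{W_s}}$, whose integrand is deterministic.

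This explicit representation reduces the differentiability claims~\eqref{item:existence.derivatives}, \eqref{item:derivative.predictable}, and~\eqref{item:derivative.solution} to differentiating a stochastic integral with deterministic integrand in the parameter $x$. I would proceed by induction on $n$, differentiating $x\mapsto\int_0^t e^{(t-s)A}B(e^{sA}x)\diff{W_s}$ under the integral sign: the chain rule for the linear map $x\mapsto e^{sA}x$ converts the $n$-th derivative of the integrand in directions $u_1,\dots,u_n$ into $B^{(n)}(e^{sA}x)(e^{sA}u_1,\dots,e^{sA}u_n)$, while the affine term $e^{tA}x$ contributes $e^{tA}u_n$ precisely for $n\in\{0,1\}$, producing the indicator $\1_{\{0,1\}}(n)$ in~\eqref{eq:exact.solution}. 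Defining $X^{n,\mathbf u}$ by the right-hand side of~\eqref{eq:exact.solution} gives predictable processes (stochastic integrals of deterministic, hence adapted, integrands), establishing~\eqref{item:derivative.predictable}, and identifying them as the Fr\'echet derivatives gives~\eqref{item:existence.derivatives} and~\eqref{item:derivative.solution}. The main obstacle is the rigorous justification of this interchange of Fr\'echet differentiation (in $\lpnb{p}{\P}{H}$) with stochastic integration: at each induction step I would estimate the $L^p(\P)$-distance between the difference quotient and the candidate derivative via the Burkholder--Davis--Gundy inequality (equivalently, via Gaussianity, since the integrands point along $e_1$), bound it by an $L^2$ norm in the time variable $s$ of the relevant integrand, and then pass to the limit by dominated convergence using the uniform bounds $\sup_{v\in H}\|B^{(n)}(v)\|_{L^{(n)}(H,H)}<\infty$ from item~\eqref{item:B.derivative.bounded} of Lemma~\ref{lem:derivative.formula}.

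For the regularity estimate~\eqref{item:lem.regularity} I would start from the explicit formula~\eqref{eq:exact.solution}. Since $B^{(n)}$ maps into $\R e_1$, the process $X^{n,\mathbf u}_t$ is a one-dimensional centered Gaussian random variable times $e_1$, so all its $L^p(\P)$-moments are comparable to its standard deviation and it suffices to bound $\int_0^t\|e^{(t-s)A}B^{(n)}(e^{sA}u_0)(e^{sA}u_1,\dots,e^{sA}u_n)\|_H^2\diff{s}$. Inserting the partition formula~\eqref{eq:derivative.formula} and applying Cauchy--Schwarz, every surviving block of a partition $\varpi\in\Pi_n$ contributes inner products whose non-$u_0$ factors combine to $\prod_{i=1}^n\|e^{sA}u_i\|_H$, while the $\|Pe^{sA}u_0\|_H$ powers in the numerator (there are $2\#_\varpi-n$ of them by~\eqref{eq:counting.singleton}) are dominated by the denominator $[1+\|Pe^{sA}u_0\|_H^2]^{\#_\varpi-\nicefrac{1}{2}}$ uniformly in $u_0$ for $n\ge 1$. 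I would then invoke the analytic smoothing bound $\|(-A)^{\delta}e^{sA}\|_{L(H)}\le C_\delta\, s^{-\delta}$ --- immediate here from $\sup_{r>0}r^\delta e^{-sr}=(\nicefrac{\delta}{e})^\delta s^{-\delta}$ and $\sup_{n\in\N}\lambda_n<0$ --- to obtain $\|e^{sA}u_i\|_H\le C_{\delta_i}\, s^{-\delta_i}\|u_i\|_{H_{-\delta_i}}$, so that each partition term is $\le C\, s^{-\sum_{i=1}^n\delta_i}\prod_{i=1}^n\|u_i\|_{H_{-\delta_i}}$. The claim then follows because $\int_0^t s^{-2\sum_{i=1}^n\delta_i}\diff{s}<\infty$ exactly when $\sum_{i=1}^n\delta_i<\nicefrac{1}{2}$; this also exhibits the source of the blow-up exploited in Theorem~\ref{lem:blowup}, namely the non-integrability of $s\mapsto s^{-2\sum_{i=1}^n\delta_i}$ at $s=0$ once $\sum_{i=1}^n\delta_i\ge\nicefrac{1}{2}$.
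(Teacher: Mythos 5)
Your proposal is correct, and it rests on the same structural key as the paper---$B$ and all its derivatives take values in $\R e_1$ while $P e_1 = 0$, so projecting the mild equation yields $P X^{0,x}_t = P e^{tA} x$ almost surely and all integrands collapse to deterministic ones---but it deploys this key along a genuinely different route. The paper imports items~(i)--(iv) wholesale from Theorem~2.1 (items~(i), (ii), (ix), (x)) in Andersson et al.~\cite{AnderssonJentzenKurniawan2016a} together with Corollary~2.10 in~\cite{AnderssonJentzenKurniawan2016arXiv}: existence, smoothness of $x \mapsto \stochval{X^{0,x}_t}$, the hierarchy of SEEs satisfied by the derivative processes (sums over partitions $\varpi \in \Pi_n$ of terms $B^{(\#_\varpi)}(X^{0,u_0}_s)(\ldots)$), and the regularity estimate of item~\eqref{item:lem.regularity} are all cited; the paper's own work is confined to item~\eqref{item:derivative.solution}, namely showing that in the imported hierarchy every partition containing a block of size at least two produces an argument $X^{k,\cdot}$ with $k \geq 2$ whose $P$-projection vanishes, so by multilinearity only the all-singleton partition survives and the hierarchy collapses to~\eqref{eq:exact.solution}. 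You instead import only base-case existence and uniqueness, derive the explicit Gaussian representation of $X^{0,x}$ first, and then obtain items~\eqref{item:existence.derivatives}, \eqref{item:derivative.predictable}, and~\eqref{item:derivative.solution} by differentiating this explicit formula under the stochastic integral, and item~\eqref{item:lem.regularity} by a direct computation (partition formula, Cauchy--Schwarz, Gaussian moment equivalence, and the smoothing bound $\|e^{sA}u\|_H \leq C_\delta \, s^{-\delta} \|u\|_{H_{-\delta}}$). The paper's route buys brevity: the identification of the derivatives and the estimate~\eqref{item:lem.regularity} come for free from the citation, and no interchange of Fr\'{e}chet differentiation with stochastic integration ever has to be justified. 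Your route buys self-containedness and transparency: you never need the derivative SEE hierarchy at all, every object in sight is an explicit Gaussian, and the mechanism behind both the finiteness in item~\eqref{item:lem.regularity} and the blow-up in Theorem~\ref{lem:blowup} (integrability or not of $s^{-2\sum_i \delta_i}$ at $s=0$) is laid bare. The price is real, if standard, work that you only sketch: the rigorous $\lpnb{p}{\P}{H}$-differentiation under the stochastic integral (your Taylor-remainder argument does work, using boundedness of $B^{(n+2)}$ from item~\eqref{item:B.derivative.bounded} of Lemma~\ref{lem:derivative.formula}), the existence of predictable modifications of the stochastic convolutions, and the up-to-modification uniqueness claims in items~\eqref{item:base.existence} and~\eqref{item:derivative.predictable}. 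One further small point: for $n=1$ the process $X^{1,\mathbf{u}}_t$ is Gaussian only after subtracting the deterministic term $e^{tA}u_1$, a harmless but necessary correction to your item~\eqref{item:lem.regularity} argument.
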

\begin{proof}
Throughout this proof for every 
$ n \in \N $, 
$ \varpi \in \Pi_n $ 
let
$
I^\varpi_i
\in
\varpi
$, 
$i \in \{1,2,\ldots,\#_\varpi\} $, 
be
the sets which satisfy that
\begin{equation}
\min( I^\varpi_1 ) < 
\min( I_2^\varpi ) < \dots < 
\min( 
I_{ \#_\varpi }^{ \varpi } 
),
\end{equation}
let
$
I_{ i, j }^\varpi
\in
I_i^{ \varpi } \subseteq \N
$, 
$ j \in \{ 1,2,\ldots,\#_{I^\varpi_i} \} $, 
$i \in \{1,2,\ldots,\#_\varpi\} $, 
be the natural numbers which satisfy for all 
$i \in \{1,2,\ldots,\#_\varpi\} $
that
\begin{equation}
I_{ i, 1 }^\varpi < I_{ i, 2 }^\varpi < \dots < I_{ i, \#_{ I_i^{ \varpi } } }^\varpi,
\end{equation}
and let 
$
[ \cdot ]_i^\varpi
\colon
H^{ n + 1 }
\to 
H^{ 
	\#_{I_i^\varpi} + 1
}
$, 
$
i \in \{ 1, 2, \dots, \#_\varpi \} 
$, 
be the mappings which satisfy for all
$
i \in \{ 1, 2, \dots, \#_\varpi \} 
$, 
$
\mathbf{u} = (u_0, u_1, \dots, u_n)
\in 
H^{ n + 1 }
$
that
\begin{equation}
[ \mathbf{u} ]_i^\varpi
= ( u_0, u_{ I_{ i, 1 }^\varpi } , u_{ I_{ i, 2 }^\varpi } , \dots , u_{ I_{ i, \#_{I_i^\varpi} }^\varpi } ).
\end{equation} 
We note that items~(i), (ii), (ix), and~(x) of Theorem~2.1 in Andersson et al.~\cite{AnderssonJentzenKurniawan2016a} 
(with 
$T=T$, 
$\eta=0$, 
$H=H$, 
$U=\R$, 
$W=W$, 
$A=A$, 
$F=0$, 
$B=(H\ni v \mapsto (\R\ni u \mapsto B(v)u \in H) \in HS(\R,H))$, 
$\alpha=0$, 
$\beta=0$, 
$ k = n $, 
$ p = p $, 
$ \delta_1=\delta_1 $,
$ \delta_2=\delta_2, \ldots, \delta_n=\delta_n $
for 
$ (\delta_1,\delta_2,\ldots,\delta_n) \in \{(\kappa_1,\kappa_2,\ldots,\kappa_n)\in[0,\nicefrac{1}{2})^n \colon \sum^n_{i=1} \kappa_i < \nicefrac{1}{2}\} $, 
$ p \in [2,\infty) $, 
$ n \in \N $
in the notation of Theorem~2.1 in~\cite{AnderssonJentzenKurniawan2016a})
ensure that
\begin{enumerate}[(a)]
\item
there exist up-to-modifications unique $(\mathcal{F}_t)_{t\in[0,T]}$/$ \mathcal{B}(H) $-predictable stochastic processes
$
X^{ n,\mathbf{u} }
\colon
$
$
[ 0 , T ] \times \Omega
\to H
$, 
$
\mathbf{u} \in H^{n+1}
$, 
$
n \in \N_0
$,
which fulfill
for all
$
n \in \N_0
$,
$ p \in [2,\infty) $,
$
\mathbf{u} = (u_0,u_1,\ldots,u_n) \in H^{n+1}
$, 
$ t \in [0,T] $
that
$
\sup_{s\in[0,T]}
\E\big[\|X^{n,\mathbf{u}}_s\|^p_H\big]
< \infty
$ 
and 
\begin{equation}
\begin{split}
&
\stochval{X_t^{n,\mathbf{u}}
	-
	\mathbbm{1}_{ \{ 0, 1 \} }(n) \, 
	e^{tA} u_n  }
\\&=
\int_0^t
e^{ ( t - s ) A }
\Bigg[
\mathbbm{1}_{ \{ 0 \} }(n)
\,
B(X_s^{0,u_0})
\\&\quad+
\sum_{ \varpi\in \Pi_n }
B^{ ( \#_\varpi ) }( X_s^{ 0,u_0 } )
\big(
X_s^{ \#_{I^\varpi_1}, [ \mathbf{u} ]_1^{ \varpi } }
,
X_s^{ \#_{I^\varpi_2}, [ \mathbf{u} ]_2^{ \varpi } }
,
\dots
,
X_s^{ \#_{I^\varpi_{\#_\varpi}}, [\mathbf{u} ]_{ \#_\varpi }^{ \varpi } }
\big)
\Bigg]
\, \diffns W_s
,
\end{split}
\end{equation}
\item
it holds for all 
$ p \in [2,\infty) $, 
$ t \in [0,T] $ 
that 
$
H \ni x \mapsto \stochval{X^{0,x}_t} \in \lpnb{p}{\P}{H}
$ 
is infinitely often Fr\'{e}chet differentiable,
\item
it holds for all 
$
n \in \N
$, 
$ p \in [2,\infty) $, 
$ \mathbf{u} \in H^n $, 
$
x \in H
$, 
$ t \in [0,T] $ 
that 
\begin{equation}
\begin{split}
&
\big(
\tfrac{d^n}{dx^n}
\stochval{X^{0,x}_t}
\big) \mathbf{u}
=
\big(
H \ni y \mapsto \stochval{X^{0,y}_t} \in \lpnb{p}{\P}{H}
\big)^{(n)}(x) \, \mathbf{u}
=
\stochval{X^{n,(x,\mathbf{u})}_t}
,
\end{split}
\end{equation}
and
\item
it holds for all 
$ n \in \N $, 
$ p \in [2,\infty) $, 
$ \delta_1,\delta_2,\ldots,\delta_n \in [0,\infty) $, 
$ t \in (0,T] $ 
with 
$ \sum^n_{i=1} \delta_i < \nicefrac{1}{2} $
that
\begin{equation}
\sup_{ \mathbf{u} = (u_0,u_1,\ldots,u_n) \in H \times (\nzspace{H})^n }
\frac{
	\big(\E\big[
	\| X^{ n,\mathbf{u} }_t \|^p_H
	\big]\big)^{\nicefrac{1}{p}}
}{
\prod^{ n }_{ i=1 }
\|u_i\|_{H_{-\delta_i}}
}
< \infty.
\end{equation}
\end{enumerate}
This and item~(i) of Corollary~2.10 in Andersson et al.~\cite{AnderssonJentzenKurniawan2016arXiv} establish items~\eqref{item:base.existence}--\eqref{item:lem.regularity}.
It thus remains to prove~\eqref{item:derivative.solution}.
For this let 
$
X^{ n,\mathbf{u} }
\colon
[ 0 , T ] \times \Omega
\to H
$, 
$
\mathbf{u} \in H^{n+1}
$, 
$
n \in \N_0
$,
be $(\mathcal{F}_t)_{t\in[0,T]}$/$ \mathcal{B}(H) $-predictable stochastic processes which fulfill
for all
$
n \in \N_0
$,
$ p \in [2,\infty) $,
$
\mathbf{u} = (u_0,u_1,\ldots,u_n) \in H^{n+1}
$, 
$ t \in [0,T] $
that
$
\sup_{s\in[0,T]}
\E\big[\|X^{n,\mathbf{u}}_s\|^p_H\big]
< \infty
$ 
and 
\begin{multline}
\label{eq:proof.SEE}
\stochval{X_t^{n,\mathbf{u}}
	-
	\mathbbm{1}_{ \{ 0, 1 \} }(n) \, 
	e^{tA} u_n}
=
\int_0^t
e^{ ( t - s ) A }
\Bigg[
\mathbbm{1}_{ \{ 0 \} }(n)
\,
B(X_s^{0,u_0})
\\+
\sum_{ \varpi\in \Pi_n }
B^{ ( \#_\varpi ) }( X_s^{ 0,u_0 } )
\big(
X_s^{ \#_{I^\varpi_1}, [ \mathbf{u} ]_1^{ \varpi } }
,
X_s^{ \#_{I^\varpi_2}, [ \mathbf{u} ]_2^{ \varpi } }
,
\dots
,
X_s^{ \#_{I^\varpi_{\#_\varpi}}, [\mathbf{u} ]_{ \#_\varpi }^{ \varpi } }
\big)
\Bigg]
\, \diffns W_s
.
\end{multline}
Note that~\eqref{eq:proof.SEE} and the fact that 
$
  \forall \, v \in H
  \colon
  P(B(v))
  =
  0
$ 
imply that 
for all 
$ x \in H $, 
$ t \in [0,T] $
it holds that 
\begin{equation}
\label{eq:0.project}
\begin{split}
  \stochval{  P X^{0,x}_t
  	-
  	P e^{tA} x}
  =
  P \int^t_0
  e^{(t-s)A}
  B( X^{0,x}_s )
  \, dW_s
  =
  \int^t_0
  e^{(t-s)A}
  \big[
  P \big( B( X^{0,x}_s ) \big)
  \big]
  \, dW_s
  =
  0
  .
\end{split}
\end{equation}
This shows that for all 
$ x \in H $, 
$ t \in [0,T] $
it holds that 
\begin{equation}
  \P\bigg(
  B( X^{0,x}_t )
  =
  \sqrt{
    1
    +
    \| P X^{0,x}_t \|^2_H
  } \,
  e_1
  =
  \sqrt{
    1
    +
    \| P e^{tA} x \|^2_H
  } \,
  e_1
  =
  B( e^{tA} x )
  \bigg)
  =1
  .
\end{equation}
This and~\eqref{eq:proof.SEE} yield that 
for all 
$ x \in H $, 
$ t \in [0,T] $
it holds that 
\begin{equation}
\label{eq:base.SEE.formula}
  \stochval{X^{0,x}_t-e^{tA} x}
  =
  \int^t_0
  e^{ (t-s)A }
  B( e^{sA} x )
  \, \diffns W_s
  .
\end{equation}
Next note that item~\eqref{item:B.formula} of Lemma~\ref{lem:derivative.formula} ensures that for all 
$ n \in \N $, 
$ \mathbf{v} \in H^n $, 
$ x \in H $
it holds that 
$ P \big(B^{(n)}(x)\mathbf{v} \big) = 0 $. 
This and~\eqref{eq:proof.SEE} imply that for all 
$ n \in \N $, 
$ \mathbf{u} = (u_0,u_1,\ldots,u_n) \in H^{n+1} $, 
$ t \in [0,T] $ 
it holds that 
\begin{equation}
\label{eq:n.project}
\begin{split}
&
  \stochval{P X^{n,\mathbf{u}}_t
  	-
  	\1_{\{1\}}(n) \, P e^{tA} u_1}
\\&=
  P
  \int^t_0
  e^{(t-s)A}
  \sum_{ \varpi\in \Pi_n }
  B^{ ( \#_\varpi ) }( X_s^{ 0,u_0 } )
  \big(
  X_s^{ \#_{I^\varpi_1}, [ \mathbf{u} ]_1^{ \varpi } }
  ,
  X_s^{ \#_{I^\varpi_2}, [ \mathbf{u} ]_2^{ \varpi } }
  ,
  \dots
  ,
  X_s^{ \#_{I^\varpi_{\#_\varpi}}, [\mathbf{u} ]_{ \#_\varpi }^{ \varpi } }
  \big)
  \, dW_s
\\&=
  \int^t_0
  e^{(t-s)A}
  \sum_{ \varpi\in \Pi_n }
  \bigg[
  P\bigg( B^{ ( \#_\varpi ) }( X_s^{ 0,u_0 } )
  \big(
  X_s^{ \#_{I^\varpi_1}, [ \mathbf{u} ]_1^{ \varpi } }
  ,
  X_s^{ \#_{I^\varpi_2}, [ \mathbf{u} ]_2^{ \varpi } }
  ,
  \dots
  ,
  X_s^{ \#_{I^\varpi_{\#_\varpi}}, [\mathbf{u} ]_{ \#_\varpi }^{ \varpi } }
  \big)
  \bigg)
  \bigg]
  \, dW_s
  =0
  .
\end{split}
\end{equation}
Hence, we obtain that for all 
$n\in\{2,3,\ldots\}$, 
$\mathbf{u}\in H^{n+1}$, 
$t\in[0,T]$
it holds that 
\begin{equation}
\label{eq:derivative.zero.projection}
  \P\big(
    P(X^{n,\mathbf{u}}_t) = 0
  \big)
  = 1
  .
\end{equation}
In addition, note that item~\eqref{item:B.formula} of Lemma~\ref{lem:derivative.formula} implies that for all 
$n\in\N$,
$ v_0,v_1,\ldots,v_n \in H $ 
it holds that 
\begin{equation}
\label{eq:B.project.input}
B^{(n)}(v_0)(v_1,v_2,\ldots,v_n)=
B^{(n)}(Pv_0)(Pv_1,Pv_2,\ldots,Pv_n)
.
\end{equation}
Combining this with~\eqref{eq:derivative.zero.projection} ensures that for all 
$ n \in \N $, 
$ \varpi \in \Pi_n $, 
$ \mathbf{u} = (u_0,u_1,\ldots,u_n) \in H^{n+1} $, 
$ t \in [0,T] $
with 
$
  \varpi \neq \big\{ \{1\}, \{2\}, \ldots, \{n\} \big\}
$
it holds that 
\begin{equation}
\label{eq:0.B}
\begin{split}
&
      \P\bigg(
      B^{ ( \#_\varpi ) }( X_t^{ 0, u_0 } )
      \big(
        X_t^{ \#_{I^\varpi_1}, [ \mathbf{u} ]_1^{ \varpi } }
        ,
        X_t^{ \#_{I^\varpi_2}, [ \mathbf{u} ]_2^{ \varpi } }
        ,
        \dots
        ,
        X_t^{ \#_{I^\varpi_{\#_\varpi}}, [\mathbf{u} ]_{ \#_\varpi }^{ \varpi } }
      \big)
\\&\quad=
      B^{ ( \#_\varpi ) }( P X_t^{ 0, u_0 } )
      \big(
      P X_t^{ \#_{I^\varpi_1}, [ \mathbf{u} ]_1^{ \varpi } }
      ,
      P X_t^{ \#_{I^\varpi_2}, [ \mathbf{u} ]_2^{ \varpi } }
      ,
      \dots
      ,
      P X_t^{ \#_{I^\varpi_{\#_\varpi}}, [\mathbf{u} ]_{ \#_\varpi }^{ \varpi } }
      \big)
      =
      0
      \bigg)
      =1.
\end{split}
\end{equation}
Equation~\eqref{eq:B.project.input} hence implies that for all 
$ n \in \N $, 
$ \mathbf{u} = (u_0,u_1,\ldots,u_n) \in H^{n+1} $, 
$ t \in [0,T] $ 
it holds that 
\begin{equation}
\begin{split}
&
  \P\Bigg(
      \sum_{ \varpi \in \Pi_n }
      B^{ ( \#_\varpi ) }( X_t^{ 0, u_0 } )
      \big(
      X_t^{ \#_{I^\varpi_1}, [ \mathbf{u} ]_1^{ \varpi } }
      ,
      X_t^{ \#_{I^\varpi_2}, [ \mathbf{u} ]_2^{ \varpi } }
      ,
      \dots
      ,
      X_t^{ \#_{I^\varpi_{\#_\varpi}}, [\mathbf{u} ]_{ \#_\varpi }^{ \varpi } }
      \big)
\\&\quad=
      B^{ (n) }( X_t^{ 0, u_0 } )
      \big(
      X_t^{ 1, (u_0,u_1) }
      ,
      X_t^{ 1, (u_0,u_2) }
      ,
      \dots
      ,
      X_t^{ 1, (u_0,u_n) }
      \big)
\\&\quad=
      B^{ (n) }( P X_t^{ 0, u_0 } )
      \big(
      P X_t^{ 1, (u_0,u_1) }
      ,
      P X_t^{ 1, (u_0,u_2) }
      ,
      \dots
      ,
      P X_t^{ 1, (u_0,u_n) }
      \big)
  \Bigg)=1
.
\end{split}
\end{equation}
Combining this with~\eqref{eq:0.project}, \eqref{eq:n.project}, and~\eqref{eq:B.project.input} shows that for all 
$ n \in \N $, 
$ \mathbf{u} = (u_0,u_1,\ldots,u_n) \in H^{n+1} $, 
$ t \in [0,T] $ 
it holds that 
\begin{equation}
\begin{split}
&
\P\Bigg(
\sum_{ \varpi \in \Pi_n }
B^{ ( \#_\varpi ) }( X_t^{ 0, u_0 } )
\big(
X_t^{ \#_{I^\varpi_1}, [ \mathbf{u} ]_1^{ \varpi } }
,
X_t^{ \#_{I^\varpi_2}, [ \mathbf{u} ]_2^{ \varpi } }
,
\dots
,
X_t^{ \#_{I^\varpi_{\#_\varpi}}, [\mathbf{u} ]_{ \#_\varpi }^{ \varpi } }
\big)
\\&\quad=
B^{ ( n ) }( P e^{tA} { u_0 } )
\big(
P e^{tA} u_1
,
P e^{tA} u_2
,
\dots,
P e^{tA} u_n
\big)
\\&\quad=
B^{ ( n ) }( e^{tA} { u_0 } )
\big(
e^{tA} u_1
,
e^{tA} u_2
,
\dots,
e^{tA} u_n
\big)
\Bigg)=1
.
\end{split}
\end{equation}
This and~\eqref{eq:proof.SEE} assure that for all 
$ n \in \N $, 
$ \mathbf{u} = (u_0,u_1,\ldots,u_n) \in H^{n+1} $, 
$ t \in [0,T] $ 
it holds that 
\begin{equation}
\label{eq:n.SDE}
\begin{split}
&
  \stochval{X_t^{n,\mathbf{u}}
  	-
  	\mathbbm{1}_{ \{ 1 \} }(n) \, e^{tA} u_n }
=
  \int_0^t
  e^{ ( t - s ) A }
  B^{ ( n ) }( e^{sA} u_0 )
  \big(
  e^{sA} u_1
  ,
  e^{sA} u_2
  ,
  \dots
  ,
  e^{sA} u_n
  \big)
  \, \diffns W_s
  .
\end{split}
\end{equation}
Combining this and~\eqref{eq:base.SEE.formula} establishes item~\eqref{item:derivative.solution}.
The proof of Lemma~\ref{lem:derivative.process} is thus completed.
\end{proof}

\subsection{Disprove of regularities for the derivative processes}
\label{sec:disprove}

\begin{theorem}
\label{lem:blowup}
Assume the setting in Section~\ref{sec:counter.setting}, 
let $ c \in (0,\infty) $, 
and assume for all $ n \in \N $ that 
$ \lambda_n = -c n^2 $.
Then 
\begin{enumerate}[(i)]
\item
\label{item:thm.base.existence}
there exist up-to-modifications unique
$ ( \mathcal{F}_t )_{ t \in [0,T] } $/$ \mathcal{B}(H) $-predictable stochastic processes
$ X^{0,x} \colon 
$
$
[0,T] \times \Omega \to H $, 
$ x \in H $, 
which fulfill for all
$ p \in [2,\infty) $,
$ x \in H $,  
$ t \in [0,T] $
that
$
  \sup_{ s \in [0,T] }
  \E\big[\|
    X_s^{0,x}
  \|^p_H
  \big]
  < \infty
$
and
\begin{equation}
\begin{split}
&
  \stochval{X_t^{0,x}
  	-
  	e^{tA} x}
  =
  \int_0^t
    e^{ ( t - s ) A }
      B(X_s^{0,x})
  \, \diffns W_s
  ,
\end{split}
\end{equation}
\item
\label{item:thm.existence.derivatives}
it holds for all 
$ p \in [2,\infty) $, 
$ t \in [0,T] $ 
that 
$
  H \ni x \mapsto \stochval{X^{0,x}_t} \in \lpnb{p}{\P}{H}
$ 
is infinitely often Fr\'{e}chet differentiable,
\item
\label{item:thm.derivative.predictable}
there exist up-to-modifications unique
$ ( \mathcal{F}_t )_{ t \in [0,T] } $/$ \mathcal{B}(H) $-predictable stochastic processes
$
  X^{n,\mathbf{u}} \colon 
$
$
  [0,T] \times \Omega \to H
$, 
$
  \mathbf{u} \in H^{n+1}
$, 
$
  n \in \N
$,
which fulfill for all 
$ p \in [2,\infty) $, 
$
  n \in \N
$, 
$ \mathbf{u} \in H^n $, 
$
  x \in H
$, 
$ t \in [0,T] $ 
that  
\begin{equation}
\begin{split}
&
  \big(
  \tfrac{d^n}{dx^n}
  \stochval{X^{0,x}_t}
  \big) \mathbf{u}
  =
  \big(
    H \ni y \mapsto \stochval{X^{0,y}_t} \in \lpnb{p}{\P}{H}
  \big)^{(n)}(x) \, \mathbf{u}
  =
  \stochval{X^{n,(x,\mathbf{u})}_t}
  ,
\end{split}
\end{equation}
\item
\label{item:regularity}
it holds for all 
$ p \in [2,\infty) $, 
$ n \in \N $, 
$ q,\delta_1,\delta_2,\ldots,\delta_n \in [0,\infty) $, 
$ t \in (0,T] $ 
with 
$ \sum^n_{i=1} \delta_i < \nicefrac{1}{2} $
that
\begin{equation}
  \sup_{ \mathbf{u} = (u_0,u_1,\ldots,u_n) \in H \times (\nzspace{H})^n }
  \frac{
  	\big(\E\big[
  	\| X^{ n,\mathbf{u} }_t \|^p_{H_{-q}}
  	\big]\big)^{\nicefrac{1}{p}}
  }{
  \prod^{ n }_{ i=1 }
  \|u_i\|_{H_{-\delta_i}}
  }
  < \infty,
\end{equation}
and  
\item
\label{item:blowup}
it holds for all 
$ p \in [2,\infty) $, 
$ n \in \N $, 
$q\in[0,\infty)$, 
$ \delta_1, \delta_2, \ldots, \delta_n \in \R $, 
$ t \in (0,T] $ 
with 
$
  \sum^n_{i=1}
  \delta_i
  > \nicefrac{1}{2}
$
that 
\begin{equation}
  \sup_{ \mathbf{u}=(u_0,u_1,\ldots,u_n) \in (\nzspace{(\cap_{r\in\R}H_r)})^{n+1} }
  \frac{
    \big(\E\big[
    \| X^{ n,\mathbf{u} }_t \|^p_{H_{-q}}
    \big]\big)^{\nicefrac{1}{p}}
  }{
    \prod^{ n }_{ i=1 }
    \|u_i\|_{H_{-\delta_i}}
  }
  =
  \infty
  .
\end{equation}
\end{enumerate}
\end{theorem}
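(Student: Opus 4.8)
The plan is as follows. Items~\eqref{item:thm.base.existence}, \eqref{item:thm.existence.derivatives}, and \eqref{item:thm.derivative.predictable} are literally items~\eqref{item:base.existence}, \eqref{item:existence.derivatives}, and \eqref{item:derivative.predictable} of Lemma~\ref{lem:derivative.process} (the hypothesis $\lambda_n=-cn^2$ is a special case of the setting there), so they follow immediately. For item~\eqref{item:regularity} I would observe that for every $q\in[0,\infty)$ the operator $(-A)^{-q}$ has operator norm $\sup_{n\in\N}(cn^2)^{-q}=c^{-q}$, whence $\|v\|_{H_{-q}}\le c^{-q}\|v\|_H$ for all $v\in H$; inserting this bound into the numerator reduces item~\eqref{item:regularity} to item~\eqref{item:lem.regularity} of Lemma~\ref{lem:derivative.process}. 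The genuine content is the blow-up in item~\eqref{item:blowup}, on which I focus.

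To prove~\eqref{item:blowup} I would fix $p$, $n$, $q$, $\delta_1,\dots,\delta_n$, $t$ with $\sum_{i=1}^n\delta_i>\nicefrac{1}{2}$ and probe the supremum along the diagonal family $\mathbf u_m=(e_m,e_m,\dots,e_m)\in(\nzspace{(\cap_{r\in\R}H_r)})^{n+1}$ for integers $m\ge 2$, letting $m\to\infty$. By item~\eqref{item:derivative.solution} of Lemma~\ref{lem:derivative.process} (equation~\eqref{eq:exact.solution}) the integrand defining $X^{n,\mathbf u_m}_t$ is \emph{deterministic}, so $X^{n,\mathbf u_m}_t$ is a Gaussian random vector. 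The key simplification is to restrict $B$ to the line $\R e_m$: since $m\ge 2$ gives $\|P e_m\|_H=1$, one has $B(\eta e_m)=h(\eta)\,e_1$ for all $\eta\in\R$, where $h\in C^\infty(\R,\R)$, $h(\eta)=\sqrt{1+\eta^2}$. Because all $n$ derivative directions coincide, the diagonal Fréchet derivative collapses to an ordinary derivative, $B^{(n)}(\xi e_m)(e_m,\dots,e_m)=h^{(n)}(\xi)\,e_1$; with $\xi=e^{-cm^2 s}$ and $e^{(t-s)A}e_1=e^{-c(t-s)}e_1$ the $e_1$-coefficient of the integrand at time $s$ is thus $e^{-c(t-s)}\,\xi^{n}h^{(n)}(\xi)$.

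Consequently the $e_1$-component $Z^{(m)}_t:=\scal{e_1}{X^{n,\mathbf u_m}_t}_H=\int_0^t e^{-c(t-s)}e^{-cnm^2 s}h^{(n)}(e^{-cm^2 s})\,\diffns W_s$ is a centered Gaussian whose variance I would compute by the Itô isometry and the substitution $\xi=e^{-cm^2 s}$:
\begin{equation*}
  \E\big[|Z^{(m)}_t|^2\big]
  =\frac{e^{-2ct}}{c\,m^2}\int_{e^{-cm^2 t}}^{1}\xi^{-2/m^2}\,\xi^{2n-1}\,[h^{(n)}(\xi)]^2\,\diffns\xi ,
\end{equation*}
where the integral converges to $D_n:=\int_0^1\xi^{2n-1}[h^{(n)}(\xi)]^2\,\diffns\xi$ as $m\to\infty$ (note $\xi^{-2/m^2}\to 1$ uniformly on $[0,1]$). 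Using that for a centered Gaussian $\E[|\cdot|^p]^{\nicefrac{1}{p}}$ is a fixed multiple $c_p\in(0,\infty)$ of its standard deviation, that $\|X^{n,\mathbf u_m}_t\|_{H_{-q}}\ge c^{-q}\,|Z^{(m)}_t|$ (restricting $(-A)^{-q}$ to the $e_1$-mode), and that $\prod_{i=1}^n\|e_m\|_{H_{-\delta_i}}=c^{-\sum_{i}\delta_i}m^{-2\sum_{i}\delta_i}$, I would obtain
\begin{equation*}
  \frac{\big(\E\big[\|X^{n,\mathbf u_m}_t\|^p_{H_{-q}}\big]\big)^{\nicefrac{1}{p}}}{\prod_{i=1}^n\|e_m\|_{H_{-\delta_i}}}
  \ge \mathrm{(const)}\cdot m^{\,2\sum_{i=1}^n\delta_i-1}\longrightarrow\infty
  \qquad (m\to\infty),
\end{equation*}
since $\sum_{i}\delta_i>\nicefrac{1}{2}$; as the diagonal family is contained in the index set, this forces the supremum in~\eqref{item:blowup} to be $\infty$.

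The main obstacle is the nondegeneracy $D_n\in(0,\infty)$: the argument only blows up if the leading Gaussian variance does not secretly vanish. Finiteness is immediate since $h$ is smooth on $[0,1]$; positivity holds because $h(\xi)=\sqrt{1+\xi^2}$ is real-analytic on $\R$ but not a polynomial, so $h^{(n)}$ cannot vanish identically on $[0,1]$, whence $\xi^{2n-1}[h^{(n)}(\xi)]^2$ is strictly positive on a set of positive measure. Two minor points remain to be checked: for $n=1$ the formula~\eqref{eq:exact.solution} carries the extra summand $e^{tA}u_1=e^{-cm^2 t}e_m$, but this lies in $\mathrm{span}(e_m)\perp\mathrm{span}(e_1)$ and hence does not reduce the $e_1$-component $Z^{(m)}_t$ underpinning the lower bound; and the factor $\xi^{-2/m^2}$ tends to $1$ uniformly on $[0,1]$, so it is harmless in the limit of the $\xi$-integral.
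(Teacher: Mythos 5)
Your proposal is correct, and for the substantive part, item~\eqref{item:blowup}, it takes a genuinely different and much lighter route than the paper. The paper perturbs along \emph{sums} of $N$ eigenmodes: it builds the vectors $v^{k,r}_{n,N}=(-A)^r\sum_{j=1}^N e_{k+jn}$ and the tuples $\mathbf{u}^{\varepsilon,m,\boldsymbol{\delta}}_{n,N}$, uses the orthogonality relation~\eqref{eq:self.adjoint.0} so that in the Fa\`{a} di Bruno expansion~\eqref{eq:derivative.formula} only the pairing partition $\{\{1,2\},\{3,4\},\ldots\}$ survives, keeps the denominators bounded uniformly in $N$ by means of the auxiliary parameters $\varepsilon$ and $m$ (see~\eqref{eq:denom.finite}), and obtains the blow-up from the logarithmic divergence of $\sum_{j,k=1}^N (j^2+k^2)^{-1}$ as $N\to\infty$ in~\eqref{eq:numerator.infinite}, finishing with H\"{o}lder's inequality for general $p$. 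You instead take every component of $\mathbf{u}$ equal to a single high mode $e_m$ and let $m\to\infty$: the chain rule collapses $B^{(n)}$ along the ray $\R e_m$ to the scalar derivative $h^{(n)}$ of $h(\xi)=\sqrt{1+\xi^2}$ (this is consistent with~\eqref{eq:derivative.formula}, which is exactly the one-dimensional Fa\`{a} di Bruno formula in this situation), the stochastic integral in~\eqref{eq:exact.solution} becomes a scalar Wiener integral in the first mode, and It\^{o}'s isometry plus the substitution $\xi=e^{-cm^2s}$ give a variance of order $m^{-2}$, against the denominator $(cm^2)^{-\sum_i\delta_i}$; the ratio then grows like $m^{2\sum_i\delta_i-1}$, which diverges precisely when $\sum_i\delta_i>\nicefrac{1}{2}$. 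Your nondegeneracy argument for $D_n=\int_0^1\xi^{2n-1}[h^{(n)}(\xi)]^2\,d\xi>0$ (a real-analytic non-polynomial function has no identically vanishing derivative) is valid, the $n=1$ correction term $e^{tA}u_1\perp e_1$ and the admissibility $e_m\in(\cap_{r\in\R}H_r)\setminus\{0\}$ are both handled, and your treatment of items~\eqref{item:thm.base.existence}--\eqref{item:regularity} via Lemma~\ref{lem:derivative.process} (with the bound $\|(-A)^{-q}\|_{L(H)}\leq c^{-q}$ for item~\eqref{item:regularity}) coincides with the paper's. Your approach is shorter and makes the exponent balance behind the threshold $\nicefrac{1}{2}$ transparent; the paper's multi-mode construction is heavier bookkeeping but produces divergence along a family whose denominators stay uniformly bounded, rather than along directions whose negative-Sobolev norms vanish.

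One parenthetical claim is wrong but harmless: $\xi^{-2/m^2}$ does \emph{not} converge to $1$ uniformly on $[0,1]$, since for each fixed $m$ it blows up as $\xi\to0^+$. What you actually need is either the bound $\xi^{-2/m^2}\leq e^{2ct}$ valid on the integration domain $[e^{-cm^2t},1]$ together with dominated convergence, or---simpler, and all that the lower bound requires---the inequality $\xi^{-2/m^2}\geq 1$ on $(0,1]$, which gives
\begin{equation}
\int_{e^{-cm^2t}}^{1}\xi^{-2/m^2}\,\xi^{2n-1}\,[h^{(n)}(\xi)]^2\,\diff{\xi}
\;\geq\;
\int_{e^{-cm^2t}}^{1}\xi^{2n-1}\,[h^{(n)}(\xi)]^2\,\diff{\xi}
\;\longrightarrow\; D_n
\qquad (m\to\infty)
\end{equation}
by monotone convergence, so that the variance is bounded below by $\tfrac{e^{-2ct}}{cm^2}\cdot\tfrac{D_n}{2}$ for all large $m$. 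With that correction the argument is complete.
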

\begin{proof}
Throughout this proof 
let 
$
  v^{ k, r }_{ n,N }
  \in H
$, 
$ N,n \in \N $, 
$ k \in \N_0 $, 
$ r \in \R $, 
be the vectors which satisfy for all 
$ N,n \in \N $, 
$ k \in \N_0 $, 
$ r \in \R $ 
that 
\begin{equation}
\label{eq:def.v}
  v^{ k, r }_{ n,N }
  =
  (-A)^r
  \left[
  \sum^N_{ j=1 }
  e_{k+jn}
  \right]
  =
  \sum^N_{ j=1 }
  [ c \, (k+jn)^2 ]^r \,
  e_{k+jn}
  =
  c^r
  \left[
  \sum^N_{j=1}
  (k+jn)^{2r} \,
  e_{k+jn}
  \right]
  ,
\end{equation} 
let 
$
  \mathbf{u}^{\varepsilon,m,\boldsymbol{\delta}}_{n,N}
  \in H^{n+1}
$, 
$ \boldsymbol{\delta} \in \R^n $, 
$ \varepsilon \in \R $,
$ m \in \N_0 $,  
$ N,n \in \N $, 
be the vectors which satisfy for all 
$ N,n \in \N $, 
$ m \in \N_0 $,  
$ \varepsilon, \delta_1,\delta_2,\ldots,\delta_{2n+1} \in \R $ 
that 
\begin{equation}
  \mathbf{u}^{\varepsilon,m,\delta_1}_{1,N}
  =
  (v^{1,-\varepsilon}_{N^m,N},
  N^m \, v^{1,\delta_1-(\nicefrac{1}{2})-\varepsilon}_{N^m,N}),
\end{equation} 
\begin{multline}
\label{eq:def.u.even}
  \mathbf{u}^{\varepsilon,m,(\delta_1,\delta_2,\ldots,\delta_{2n})}_{2n,N}
=
    ( e_1, v^{1,\delta_1-\varepsilon}_{nN^m,N}, v^{1,\delta_2-\varepsilon}_{nN^m,N}, 
    v^{2,\delta_3-\varepsilon}_{nN^m,N}, v^{2,\delta_4-\varepsilon}_{nN^m,N},
    \ldots, 
\\
    v^{n-1,\delta_{2n-3}-\varepsilon}_{nN^m,N}, 
    v^{n-1,\delta_{2n-2}-\varepsilon}_{nN^m,N}, 
    v^{n,\delta_{2n-1}-\varepsilon}_{nN^m,N}, 
    N^m \, v^{n,\delta_{2n}-(\nicefrac{1}{2})-\varepsilon}_{nN^m,N} ),
\end{multline}
and 
\begin{multline}
\label{eq:def.u.odd}
\!\!\!
  \mathbf{u}^{\varepsilon,m,(\delta_1,\delta_2,\ldots,\delta_{2n+1})}_{2n+1,N}
=
  ( v^{n+1,-\varepsilon}_{(n+1)N^m,N}, v^{1,\delta_1-\varepsilon}_{(n+1)N^m,N}, v^{1,\delta_2-\varepsilon}_{(n+1)N^m,N}, 
  v^{2,\delta_3-\varepsilon}_{(n+1)N^m,N}, 
  v^{2,\delta_4-\varepsilon}_{(n+1)N^m,N}, 
  \ldots, 
\\
  v^{n,\delta_{2n-1}-\varepsilon}_{(n+1)N^m,N}, v^{n,\delta_{2n}-\varepsilon}_{(n+1)N^m,N}, 
  N^m \, v^{n+1,\delta_{2n+1}-(\nicefrac{1}{2})-\varepsilon}_{(n+1)N^m,N} ),
\end{multline}
let 
$
  \theta^n_i
  \colon H^n \to H
$, 
$ i \in \{1,2,\ldots,n\} $, 
$ n \in \N $, 
be the functions which satisfy for all 
$ n \in \N $, 
$ i \in \{1,2,\ldots,n\} $, 
$ \mathbf{u}=(u_1,u_2,\ldots,u_n) \in H^n $ 
that 
$
  \theta^n_i(\mathbf{u})
  = u_i
$, 
and let 
$ \lfloor \cdot \rfloor \colon \R \to \R $ 
and 
$ \lceil \cdot \rceil \colon \R \to \R $ 
be the functions which satisfy for all $ t \in \R $ that 
\begin{equation}
  \lfloor t \rfloor =
  \max\!\left(
  (-\infty,t]
  \cap
  \{ 0, 1 , - 1 , 2 , - 2 , \dots \}
  \right)
  =
  \max( (-\infty,t] \cap \mathbb{Z} )
\end{equation}
and
\begin{equation}
\begin{split}
  \lceil t \rceil =
  \min\!\left(
    [ t, \infty )
    \cap
    \{ 0, 1 , - 1 , 2 , - 2 , \dots \}
  \right)
  =
  \min( [t,\infty) \cap \mathbb{Z} )
  .
\end{split}
\end{equation}
Note that for all
$N,n\in\N$,
$m\in\N_0$,
$\varepsilon,\delta_1,\delta_2,\ldots,\delta_{2n-1}\in\R$
it holds that 
\begin{multline}
\label{eq:def.u.odd.combined}
\!\!\!
  \mathbf{u}^{\varepsilon,m,(\delta_1,\delta_2,\ldots,\delta_{2n-1})}_{2n-1,N}
=
  ( v^{n,-\varepsilon}_{nN^m,N}, v^{1,\delta_1-\varepsilon}_{nN^m,N}, v^{1,\delta_2-\varepsilon}_{nN^m,N}, 
  v^{2,\delta_3-\varepsilon}_{nN^m,N}, 
  v^{2,\delta_4-\varepsilon}_{nN^m,N}, 
  \ldots, 
\\
  v^{n-1,\delta_{2n-3}-\varepsilon}_{nN^m,N}, v^{n-1,\delta_{2n-2}-\varepsilon}_{nN^m,N}, 
  N^m \, v^{n,\delta_{2n-1}-(\nicefrac{1}{2})-\varepsilon}_{nN^m,N} )
  .
\end{multline}
Moreover, observe that items~\eqref{item:base.existence}--\eqref{item:lem.regularity} of Lemma~\ref{lem:derivative.process} establish 
items~\eqref{item:thm.base.existence}--\eqref{item:regularity}. 
It thus remains to prove item~\eqref{item:blowup}.
For this let $X^{n,\mathbf{u}}\colon[0,T]\times\Omega\to H$, 
$\mathbf{u}\in H^{n+1}$, 
$n\in\N_0$, 
be $(\mathcal{F}_t)_{t\in[0,T]}$/$\mathcal{B}(H)$-predictable stochastic processes which fulfill that for all 
$p\in[2,\infty)$, 
$n\in\N$,
$\mathbf{u}\in H^n$, 
$x\in H$, 
$t\in[0,T]$ 
it holds
\begin{enumerate}[(I)]
	\item 
	that 
	$
	\sup_{s\in[0,T]}
	\E\big[\|X^{0,x}_s\|^p_H\big]
	< \infty
	$, 
	\item
	that 
\begin{equation}
\begin{split}
&
\stochval{X_t^{0,x}
	-
	e^{tA} x}
=
\int_0^t
e^{ ( t - s ) A }
B(X_s^{0,x})
\, \diffns W_s
,
\end{split}
\end{equation}
and
\item
that
\begin{equation}
\begin{split}
&
\big(
\tfrac{d^n}{dx^n}
\stochval{X^{0,x}_t}
\big) \mathbf{u}
=
\big(
H \ni y \mapsto \stochval{X^{0,y}_t} \in \lpnb{p}{\P}{H}
\big)^{(n)}(x) \, \mathbf{u}
=
\stochval{X^{n,(x,\mathbf{u})}_t}
.
\end{split}
\end{equation}
\end{enumerate}
Next observe that the fact that 
$
  \forall \, n, j, k, l_1, l_2, \ldots, l_n \in \N, \,
  i \in \{1,2,\ldots,n\}
  \colon
  l_i + jk \geq 2
$
implies that for all 
$ N, n, k, l_1, l_2, \ldots, l_n \in \N $, 
$ r_1, r_2, \ldots, r_n \in \R $, 
$ t \in [0,T] $ 
it holds that 
\begin{equation}
\prod^n_{i=1}
\big\|
P e^{tA} v^{l_i,r_i}_{k,N}
\big\|^2_H
=
\prod^n_{i=1}
\big\|
P e^{tA}(-A)^{r_i} {\smallsum^N_{j=1}} e_{l_i+jk}
\big\|^2_H
=
\prod^n_{i=1}
\big\|
e^{tA}(-A)^{r_i} {\smallsum^N_{j=1}} e_{l_i+jk}
\big\|^2_H.
\end{equation}
This shows that for all 
$ N, n, k, l_1, l_2, \ldots, l_n \in \N $, 
$ r_1, r_2, \ldots, r_n \in \R $, 
$ t \in [0,T] $ 
it holds that 
\begin{equation}
\begin{split}
&
  \prod^n_{i=1}
  \big\|
    P e^{tA} v^{l_i,r_i}_{k,N}
  \big\|^2_H
=
  \prod^n_{i=1}
  \big\|
  {\smallsum^N_{j=1}} \big(e^{tA}(-A)^{r_i} e_{l_i+jk}\big)
  \big\|^2_H  
\\&=
  \prod^n_{i=1}
  \left(
  \sum^N_{j=1}
  \big\|
  e^{tA}(-A)^{r_i} e_{l_i+jk}
  \big\|^2_H
  \right)
=
  \prod^n_{i=1}
  \left(
  \sum^N_{j=1}
  \left[
    e^{-c(l_i+jk)^2 t} \,
    \|(-A)^{r_i} e_{l_i+jk}\|_H
  \right]^2
  \right)
\\&=
  \sum^N_{j_1=1}
  \sum^N_{j_2=1}
  \ldots
  \sum^N_{j_n=1}
  \left(
  \prod^n_{i=1}
  \left[
    e^{-c(l_i+j_i k)^2 t} \,
    \|(-A)^{r_i} e_{l_i+j_i k}\|_H
  \right]^2
  \right)
\\&\geq
  \sum^N_{j_1=1}
  \sum^{j_1}_{j_2=j_1}
  \sum^{j_1}_{j_3=j_1}
  \ldots
  \sum^{j_1}_{j_n=j_1}
  \left(
  \prod^n_{i=1}
  \left[
    e^{-c(l_i+j_i k)^2 t} \,
    \|(-A)^{r_i} e_{l_i+j_i k}\|_H
  \right]^2
  \right)
\\&=
  \sum^N_{j=1}
  \left(
  \prod^n_{i=1}
  \left[
    e^{-c(l_i+jk)^2 t} \,
    \|(-A)^{r_i} e_{l_i+jk}\|_H
  \right]^2
  \right)
=
  \sum^N_{j=1}
  \left(
  \prod^n_{i=1}
  \left[
    e^{-c(l_i+jk)^2 t} \,
    [c\,(l_i+jk)^2]^{r_i}
  \right]^2
  \right)
  .
\end{split}
\end{equation}
This and the fact that 
\begin{equation}
  \forall \, x \in \R
  \colon
  x=\max\{x,0\}+\min\{x,0\}=\max\{x,0\}-\max\{-x,0\}
\end{equation}
ensure that for all 
$ N, n, k, l_1, l_2, \ldots, l_n \in \N $, 
$ r_1, r_2, \ldots, r_n \in \R $, 
$ t \in [0,T] $ 
it holds that 
\begin{equation}
\begin{split}
  \prod^n_{i=1}
  \big\|
    P e^{tA} v^{l_i,r_i}_{k,N}
  \big\|^2_H
&\geq
  \sum^N_{j=1}
  \left(
  \prod^n_{i=1}
  \Bigg[
  \big|
    e^{-c(l_i+jk)^2 t} \,
    [c\,(\nicefrac{jk}{n})^2]^{r_i}
  \big|^2
  \bigg[
  \frac{l_i+jk}{(\nicefrac{jk}{n})}
  \bigg]^{4r_i}
  \Bigg]
  \right)
\\&=
  \sum^N_{j=1}
\left(
\prod^n_{i=1}
\Bigg[
\big|
e^{-c(l_i+jk)^2 t} \,
[c\,(\nicefrac{jk}{n})^2]^{r_i}
\big|^2
\bigg[
  n+\frac{nl_i}{jk}
\bigg]^{4r_i}
\Bigg]
\right)
\\&=
  \sum^N_{j=1}
  \left(
  \prod^n_{i=1}
  \Bigg[
  \big|
    e^{-c(l_i+jk)^2 t} \,
    [c\,(\nicefrac{jk}{n})^2]^{r_i}
  \big|^2
  \frac{
  (
  n+\nicefrac{nl_i}{(jk)}
  )^{4\max\{r_i,0\}}
  }{
  (
  n+\nicefrac{nl_i}{(jk)}
  )^{4\max\{-r_i,0\}}
  }
  \Bigg]
  \right)
\\&\geq
  \sum^N_{j=1}
  \left(
  \prod^n_{i=1}
    \Bigg[
  \frac{
  \big|
    e^{-c(l_i+jk)^2 t} \,
    [c\,(\nicefrac{jk}{n})^2]^{r_i}
  \big|^2
  }{
  (
  n+\nicefrac{nl_i}{(jk)}
  )^{4\max\{-r_i,0\}}
  }
  \Bigg]
  \right)
\\&\geq
  \sum^N_{j=1}
  \left(
  \prod^n_{i=1}
  \Bigg[
  \frac{
  \big|
    e^{-c(l_i+jk)^2 t} \,
    [c\,(\nicefrac{jk}{n})^2]^{r_i}
  \big|^2
  }{
  (
  n+n\max_{m\in\{1,2,\ldots,n\}} l_m
  )^{4\max\{-r_i,0\}}
  }
  \Bigg]
  \right)
  .
\end{split}
\end{equation}
This assures that for all 
$ N, n, l_1, l_2, \ldots, l_n \in \N $, 
$ k \in \{n,2n,3n,\ldots\} $, 
$ r_1, r_2, \ldots, r_n \in \R $, 
$ t \in [0,T] $ 
it holds that 
\begin{equation}
\begin{split}
&
  \prod^n_{i=1}
  \big\|
    P e^{tA} v^{l_i,r_i}_{k,N}
  \big\|^2_H
\geq
  \sum^N_{j=1}
  \left(
  \prod^n_{i=1}
  \Bigg[
  \frac{
  \big|
    e^{-c(l_i+jk)^2 t} \,
    [c\,(\nicefrac{jk}{n})^2]^{r_i}
  \big|^2
  }{
  (
  n+n\max_{m\in\{1,2,\ldots,n\}} l_m
  )^{4|r_i|}
  }
  \Bigg]
  \right)
\\&=
  \frac{1}{
  (
  n+n\max_{i\in\{1,2,\ldots,n\}} l_i
  )^{4\sum^n_{i=1}|r_i|}
  }
  \left[
  \sum^N_{j=1}
  \left(
  \prod^n_{i=1}
  \big|
    e^{-c(l_i+jk)^2 t} \,
    [c\,(\nicefrac{jk}{n})^2]^{r_i}
  \big|^2
  \right)
  \right]
\\&\geq
  \frac{1}{
  (
  n+n\max_{i\in\{1,2,\ldots,n\}} l_i
  )^{4\sum^n_{i=1}|r_i|}
  }
  \left[
  \sum^N_{j=1}
  \left(
  \prod^n_{i=1}
  \big|
    e^{-2c(l_i)^2 t} \,
    e^{-2c(jk)^2 t} \,
    [c\,(\nicefrac{jk}{n})^2]^{r_i}
  \big|^2
  \right)
  \right]
\\&=
  \frac{
  e^{-4ct\sum^n_{i=1} |l_i|^2}
  }{
  (
  n+n\max_{i\in\{1,2,\ldots,n\}} l_i
  )^{4\sum^n_{i=1}|r_i|}
  }
  \left[
  \sum^N_{j=1}
  \big|
    e^{-2c(jk)^2 n t} \,
      [c\,(\nicefrac{jk}{n})^2]^{\sum^n_{i=1} r_i}
  \big|^2
  \right]
  .
\end{split}
\end{equation}
Therefore, we obtain that for all 
$ N, n, l_1, l_2, \ldots, l_n \in \N $, 
$ k \in \{n,2n,3n,\ldots\} $, 
$ r_1, r_2, \ldots, r_n \in \R $, 
$ t \in [0,T] $ 
it holds that 
\begin{equation}
\label{eq:power.multiplication}
\begin{split}
  \prod^n_{i=1}
  \big\|
    P e^{tA} v^{l_i,r_i}_{k,N}
  \big\|^2_H
&\geq
  \frac{
  e^{-4ctn\max_{i\in\{1,2,\ldots,n\}} |l_i|^2}
  }{
  (
  n+n\max_{i\in\{1,2,\ldots,n\}} l_i
  )^{4\sum^n_{i=1}|r_i|}
  }
  \left[
  \sum^N_{j=1}
  \big|
    e^{-2n^3tc(\nicefrac{jk}{n})^2} \,
      [c\,(\nicefrac{jk}{n})^2]^{\sum^n_{i=1} r_i}
  \big|^2
  \right]
\\&=
  \frac{
	e^{-4ctn\max_{i\in\{1,2,\ldots,n\}} |l_i|^2}
}{
	(
	n+n\max_{i\in\{1,2,\ldots,n\}} l_i
	)^{4\sum^n_{i=1}|r_i|}
}
\left[
\sum^N_{j=1}
\|
e^{2n^3tA} (-A)^{(\sum^n_{i=1}r_i)} e_{(\nicefrac{jk}{n})}
\|^2_H
\right]
\\&=
  \frac{
  e^{-4ctn\max_{i\in\{1,2,\ldots,n\}} |l_i|^2}
  }{
  (
  n+n\max_{i\in\{1,2,\ldots,n\}} l_i
  )^{4\sum^n_{i=1}|r_i|}
  } \,
  \| e^{2n^3tA} (-A)^{(\sum^n_{i=1}r_i)} 
  {\smallsum^N_{j=1}} e_{(\nicefrac{jk}{n})} \|^2_H
\\&=
  \frac{
  e^{-4ctn\max_{i\in\{1,2,\ldots,n\}} |l_i|^2}
  }{
  (
  n+n\max_{i\in\{1,2,\ldots,n\}} l_i
  )^{4\sum^n_{i=1}|r_i|}
  } \,
  \|e^{2n^3tA} \, v^{0,\sum^n_{i=1}r_i}_{\nicefrac{k}{n},N}\|^2_H
  .
\end{split}
\end{equation}
Furthermore, note that for all 
$ N,n \in \N $, 
$ k_1, k_2 \in \{1,2,\ldots,n\} $, 
$ r_1,r_2 \in \R $, 
$ t \in [0,T] $ 
it holds that 
\begin{equation}
\label{eq:self.adjoint}
\begin{split}
&
  \langle P e^{tA} v^{k_1,r_1}_{n,N}, e^{tA} v^{k_2,r_2}_{n,N} \rangle_H
=
  \langle P e^{tA} (-A)^{r_1} v^{k_1,0}_{n,N}, P e^{tA} (-A)^{r_2} v^{k_2,0}_{n,N} \rangle_H
\\&=
  \1_{\{k_1\}}(k_2) \,
  \| P e^{tA} (-A)^{\nicefrac{(r_1+r_2)}{2}}
  v^{k_1,0}_{n,N} \|^2_H
  =
  \1_{\{k_1\}}(k_2) \,
  \| P e^{tA} v^{k_1,\nicefrac{(r_1+r_2)}{2}}_{n,N} \|^2_H
  .
\end{split}
\end{equation}
In particular, this implies that for all 
$ N,n \in \N $, 
$ k_1, k_2 \in \{1,2,\ldots,n\} $, 
$ r_1,r_2 \in \R $, 
$ t \in [0,T] $ 
with
$
  k_1\neq k_2
$
it holds that 
\begin{equation}
\label{eq:self.adjoint.0}
\begin{split}
\langle P e^{tA} v^{k_1,r_1}_{n,N}, e^{tA} v^{k_2,r_2}_{n,N} \rangle_H
=
0
.
\end{split}
\end{equation}
Next observe that items~\eqref{item:B.smoothness} and~\eqref{item:B.derivative.bounded} of Lemma~\ref{lem:derivative.formula} ensure that for all 
$ n \in \N $, 
$r\in[0,\infty)$, 
$
  u_0, u_1, \ldots, u_n \in H
$, 
$
  t\in[0,T]
$ 
it holds that 
\begin{equation}
  \int^t_0
  \|
  e^{(t-s)A }
  B^{(n)}( e^{sA} u_0 ) 
  ( e^{sA} u_1, e^{sA} u_2, \ldots, e^{sA} u_n ) 
  \|^2_{H_{-r}}
  \, ds
  < \infty
  .
\end{equation}
Item~\eqref{item:derivative.solution} of Lemma~\ref{lem:derivative.process} and 
It\^{o}'s isometry hence
show that for all 
$ n \in \N $, 
$r\in[0,\infty)$, 
$
  \mathbf{u}=
  (u_0, u_1, \ldots, u_n) \in H^{n+1}
$, 
$ t\in [0,T] $ 
it holds that 
\begin{equation}
\label{eq:delete.initial}
\begin{split}
&
  \E\big[ \| X^{ n,\mathbf{u} }_t \|^2_{H_{-r}} \big]
\\&=
  \mathbbm{1}_{\{1\}}(n)
  \Bigg(
  \| e^{tA} u_1 \|^2_{H_{-r}}
  +
  2 \,
  \E\!\left[
  \bigg<
    e^{tA} u_1
    ,
    \int^t_0
    e^{(t-s)A }
    B'( e^{sA} u_0 ) e^{sA} u_1 
    \, \diffns{W_s}
  \bigg>_{\!H_{-r}}
  \right]
  \Bigg)
\\&\quad+
  \E\!\left[\left\|
  \int^t_0
  (-A)^{-r}
  e^{(t-s)A }
  B^{(n)}( e^{sA} u_0 ) 
  ( e^{sA} u_1, e^{sA} u_2, \ldots, e^{sA} u_n ) 
  \,dW_s
    \right\|^2_H\right]
\\&=
  \mathbbm{1}_{\{1\}}(n)
  \bigg(
  \| e^{tA} u_1 \|^2_{H_{-r}}
  +
  2 \,
  \left<
    e^{tA} u_1
    ,
    \E\!\left[
    \int^t_0
    e^{(t-s)A }
    B'( e^{sA} u_0 ) e^{sA} u_1 
    \, \diffns{W_s}
    \right]
  \right>_{\!H_{-r}}
  \bigg)
\\&\quad+
  \int^t_0
    \|
  (-A)^{-r}
  e^{(t-s)A }
  B^{(n)}( e^{sA} u_0 ) 
  ( e^{sA} u_1, e^{sA} u_2, \ldots, e^{sA} u_n ) 
    \|^2_H
  \,\diffns{s}
\\&=
  \mathbbm{1}_{\{1\}}(n) \,
  \| e^{tA} u_1 \|^2_{H_{-r}}
\\&\quad+
  \int^t_0
    \|
  (-A)^{-r}
  e^{(t-s)A }
  B^{(n)}( e^{sA} \theta^{n+1}_{1}(\mathbf{u}) ) 
  ( e^{sA} \theta^{n+1}_{2}(\mathbf{u}), e^{sA} \theta^{n+1}_{3}(\mathbf{u}), \ldots, e^{sA} \theta^{n+1}_{n+1}(\mathbf{u}) ) 
    \|^2_H
  \,\diffns{s}
\\&\geq
  \int^t_0
    \|
  e^{(t-s)A }
 B^{(n)}( e^{sA} \theta^{n+1}_{1}(\mathbf{u}) ) 
( e^{sA} \theta^{n+1}_{2}(\mathbf{u}), e^{sA} \theta^{n+1}_{3}(\mathbf{u}), \ldots, e^{sA} \theta^{n+1}_{n+1}(\mathbf{u}) ) 
    \|^2_{H_{-r}}
  \,\diffns{s}
  .
\end{split}
\end{equation}
In particular, this shows that for all 
$ N,n \in \N $,
$ m \in \N_0 $, 
$r\in[0,\infty)$, 
$ \varepsilon,\delta_1,\delta_2,\ldots,\delta_{2n} \in \R $, 
$ t \in [0,T] $
it holds that 
\begin{multline}
\label{eq:delete.initial.odd}
  \E\Big[\big\|
    X^{ 2n-1,\mathbf{u}^{\varepsilon,m,(\delta_1,\delta_2,\ldots,\delta_{2n-1})}_{2n-1,N} }_t
  \big\|^2_{H_{-r}}\Big]
\\\geq
  \int^t_0
    \big\|
  e^{(t-s)A }
  B^{(2n-1)}( e^{sA} \theta^{2n}_1(\mathbf{u}^{\varepsilon,m,(\delta_1,\delta_2,\ldots,\delta_{2n-1})}_{2n-1,N}) )
  \big(
  e^{sA} \theta^{2n}_2(\mathbf{u}^{\varepsilon,m,(\delta_1,\delta_2,\ldots,\delta_{2n-1})}_{2n-1,N}),
\\
  e^{sA} \theta^{2n}_3(\mathbf{u}^{\varepsilon,m,(\delta_1,\delta_2,\ldots,\delta_{2n-1})}_{2n-1,N}),
  \ldots,
  e^{sA}\theta^{2n}_{2n}(\mathbf{u}^{\varepsilon,m,(\delta_1,\delta_2,\ldots,\delta_{2n-1})}_{2n-1,N})
  \big)
    \big\|^2_{H_{-r}}
  \,\diffns{s}
\end{multline}
and
\begin{multline}
\label{eq:delete.initial.even}
  \E\Big[\big\|
    X^{ 2n,\mathbf{u}^{\varepsilon,m,(\delta_1,\delta_2,\ldots,\delta_{2n})}_{2n,N} }_t
  \big\|^2_{H_{-r}}\Big]
\\\geq
  \int^t_0
    \big\|
  e^{(t-s)A }
  B^{(2n)}( e^{sA} \theta^{2n+1}_1(\mathbf{u}^{\varepsilon,m,(\delta_1,\delta_2,\ldots,\delta_{2n})}_{2n,N}) )
  \big(
  e^{sA} \theta^{2n+1}_2(\mathbf{u}^{\varepsilon,m,(\delta_1,\delta_2,\ldots,\delta_{2n})}_{2n,N}),
\\
  e^{sA} \theta^{2n+1}_3(\mathbf{u}^{\varepsilon,m,(\delta_1,\delta_2,\ldots,\delta_{2n})}_{2n,N}),
  \ldots,
  e^{sA}\theta^{2n+1}_{2n+1}(\mathbf{u}^{\varepsilon,m,(\delta_1,\delta_2,\ldots,\delta_{2n})}_{2n,N})
  \big)
    \big\|^2_{H_{-r}}
  \,\diffns{s}
  .
\end{multline}
In the next step we estimate the right hand sides of~\eqref{eq:delete.initial.odd} and~\eqref{eq:delete.initial.even} from below to establish suitable lower bounds for the left hand sides of~\eqref{eq:delete.initial.odd} and~\eqref{eq:delete.initial.even}, respectively.
We start with estimating the right hand side of~\eqref{eq:delete.initial.odd} from below.
Observe that~\eqref{eq:def.u.odd.combined} implies that for all 
$ N, n \in \N $,
$ m \in \N_0 $, 
$ \varepsilon, \delta_1,\delta_2,
\ldots,\delta_{2n-1} \in \R $, 
$ t \in [0,T] $
it holds that 
\begin{equation}
\begin{split}
&
  B^{(2n-1)}( e^{tA} \theta^{2n}_1(\mathbf{u}^{\varepsilon,m,(\delta_1,\delta_2,\ldots,\delta_{2n-1})}_{2n-1,N}) )
  \big(
  e^{tA} \theta^{2n}_2(\mathbf{u}^{\varepsilon,m,(\delta_1,\delta_2,\ldots,\delta_{2n-1})}_{2n-1,N}),
  e^{tA} \theta^{2n}_3(\mathbf{u}^{\varepsilon,m,(\delta_1,\delta_2,\ldots,\delta_{2n-1})}_{2n-1,N}),
  \ldots,
  \\&\quad
  e^{tA}\theta^{2n}_{2n}(\mathbf{u}^{\varepsilon,m,(\delta_1,\delta_2,\ldots,\delta_{2n-1})}_{2n-1,N})
  \big)
\\&=
  B^{(2n-1)}( e^{tA} v^{n,-\varepsilon}_{nN^m,N} )
  \big(
    e^{tA} v^{1,\delta_1-\varepsilon}_{nN^m,N},
    e^{tA} v^{1,\delta_2-\varepsilon}_{nN^m,N},
    e^{tA} v^{2,\delta_3-\varepsilon}_{nN^m,N},
    e^{tA} v^{2,\delta_4-\varepsilon}_{nN^m,N},
\\&\quad
    \ldots,
    e^{tA} v^{n-1,\delta_{2n-3}-\varepsilon}_{nN^m,N},
    e^{tA} v^{n-1,\delta_{2n-2}-\varepsilon}_{nN^m,N},
    N^m \, e^{tA} v^{n,\delta_{2n-1}-(\nicefrac{1}{2})-\varepsilon}_{nN^m,N}
  \big)
  .
\end{split}
\end{equation}
Item~\eqref{item:B.formula} of Lemma~\ref{lem:derivative.formula} therefore yields that for all 
$ N, n \in \N $,
$ m \in \N_0 $, 
$ \varepsilon, \delta_1,\delta_2,
\ldots,\delta_{2n-1} \in \R $, 
$ t \in [0,T] $
it holds that 
\begin{equation}
\label{eq:odd.formula}
\begin{split}
&
  B^{(2n-1)}( e^{tA} \theta^{2n}_1(\mathbf{u}^{\varepsilon,m,(\delta_1,\delta_2,\ldots,\delta_{2n-1})}_{2n-1,N}) )
  \big(
  e^{tA} \theta^{2n}_2(\mathbf{u}^{\varepsilon,m,(\delta_1,\delta_2,\ldots,\delta_{2n-1})}_{2n-1,N}),
  e^{tA} \theta^{2n}_3(\mathbf{u}^{\varepsilon,m,(\delta_1,\delta_2,\ldots,\delta_{2n-1})}_{2n-1,N}),
  \ldots,
  \\&\quad
  e^{tA}\theta^{2n}_{2n}(\mathbf{u}^{\varepsilon,m,(\delta_1,\delta_2,\ldots,\delta_{2n-1})}_{2n-1,N})
  \big)
\\&=
  \Bigg(
  {\sum\limits_{
    \varpi \in \Pi_{2n-1}
  }}
  \Bigg(
  \frac{
    \big(
      {\prod^{ \#_\varpi - 1 }_{ i=0 }}
      (1-2i)
    \big)
  }{
  \big[ 1 + \| P e^{tA} v^{n,-\varepsilon}_{nN^m,N} \|^2_H \big]^{
    ( \#_\varpi - \nicefrac{1}{2} )
  }
  }
\\&\cdot
  \Bigg[
  \prod_{ I \in \varpi }
  \bigg(
  \Big<
    \mathbbm{1}_{\{1\}}( \#_I ) \, P e^{tA} v^{n,-\varepsilon}_{nN^m,N} 
    ,
    N^{m\mathbbm{1}_{\{2n-1\}}(\min(I))}
    e^{tA} v^{\lceil \nicefrac{\min(I)}{2} \rceil,\delta_{\min(I)}-(\nicefrac{1}{2})\mathbbm{1}_{\{2n-1\}}(\min(I))-\varepsilon}_{nN^m,N}
  \Big>_H
\\&+
  \Big<
    \mathbbm{1}_{\{2\}}( \#_I ) \, P \big( N^{m\mathbbm{1}_{\{2n-1\}}(\max(I))} e^{tA} v^{\lceil \nicefrac{\max(I)}{2} \rceil,\delta_{\max(I)}-(\nicefrac{1}{2})\mathbbm{1}_{\{2n-1\}}(\max(I))-\varepsilon}_{nN^m,N}
    \big)
    ,
    e^{tA} v^{\lceil \nicefrac{\min(I)}{2} \rceil,\delta_{\min(I)}-\varepsilon}_{nN^m,N}
  \Big>_H
  \bigg)
  \Bigg]
  \Bigg)
  \Bigg)
  e_1
  .
\end{split}
\end{equation}
This and~\eqref{eq:self.adjoint.0}
imply that for all 
$ N, n \in \N $,
$ m \in \N_0 $, 
$ \varepsilon, \delta_1,\delta_2,
\ldots,\delta_{2n-1} \in \R $, 
$ t \in [0,T] $
it holds that 
\begin{equation}
\begin{split}
&
  B^{(2n-1)}( e^{tA} \theta^{2n}_1(\mathbf{u}^{\varepsilon,m,(\delta_1,\delta_2,\ldots,\delta_{2n-1})}_{2n-1,N}) )
  \big(
  e^{tA} \theta^{2n}_2(\mathbf{u}^{\varepsilon,m,(\delta_1,\delta_2,\ldots,\delta_{2n-1})}_{2n-1,N}),
  e^{tA} \theta^{2n}_3(\mathbf{u}^{\varepsilon,m,(\delta_1,\delta_2,\ldots,\delta_{2n-1})}_{2n-1,N}),
  \ldots,
  \\&\quad
  e^{tA}\theta^{2n}_{2n}(\mathbf{u}^{\varepsilon,m,(\delta_1,\delta_2,\ldots,\delta_{2n-1})}_{2n-1,N})
  \big)
\\&=
  \Bigg(
  \sum_{
    \varpi \in \Pi_{2n-1}, \,
    \varpi = \{ \{1,2\}, \{3,4\}, \ldots, \{2n-3,2n-2\}, \{2n-1\} \}
  }
  \Bigg(
  \frac{
      \big(\!
      \prod^{ \#_\varpi - 1 }_{ i=0 }
      (1-2i)
      \big)
  }{
    [ 1 + \| P e^{tA} v^{n,-\varepsilon}_{nN^m,N} \|^2_H ]^{
      ( \#_\varpi - 1/2 )
    }
  }
\\ & \quad \cdot
  \big<
    P e^{tA} v^{n,-\varepsilon}_{nN^m,N}
    ,
    N^m e^{tA} v^{n,\delta_{2n-1}-(\nicefrac{1}{2})-\varepsilon}_{nN^m,N}
  \big>_H
  \Bigg[
  \prod^{n-1}_{ i=1 }
  \big<
    P e^{tA} v^{i,\delta_{2i}-\varepsilon}_{nN^m,N} 
    ,
    e^{tA} v^{i,\delta_{2i-1}-\varepsilon}_{nN^m,N}
  \big>_H
  \Bigg]
  \Bigg)
  \Bigg)
  e_1
.
\end{split}
\end{equation}
Identity~\eqref{eq:self.adjoint}
therefore shows that for all 
$ N, n \in \N $,
$ m \in \N_0 $, 
$ \varepsilon, \delta_1,\delta_2,
\ldots,\delta_{2n-1} \in \R $, 
$ t \in [0,T] $
it holds that 
\begin{equation}
\label{eq:odd.simplification}
\begin{split}
&
B^{(2n-1)}( e^{tA} \theta^{2n}_1(\mathbf{u}^{\varepsilon,m,(\delta_1,\delta_2,\ldots,\delta_{2n-1})}_{2n-1,N}) )
\big(
e^{tA} \theta^{2n}_2(\mathbf{u}^{\varepsilon,m,(\delta_1,\delta_2,\ldots,\delta_{2n-1})}_{2n-1,N}),
e^{tA} \theta^{2n}_3(\mathbf{u}^{\varepsilon,m,(\delta_1,\delta_2,\ldots,\delta_{2n-1})}_{2n-1,N}),
\ldots,
\\&\quad
e^{tA}\theta^{2n}_{2n}(\mathbf{u}^{\varepsilon,m,(\delta_1,\delta_2,\ldots,\delta_{2n-1})}_{2n-1,N})
\big)
\\&=
N^m \, \Bigg(
\frac{
	\big(\!
	\prod^{n-1}_{ i=0 }
	(1-2i)
	\big)
}{
	[ 1 + \| P e^{tA} v^{n,-\varepsilon}_{nN^m,N} \|^2_H ]^{
		( n - 1/2 )
	}
}\,
\| P e^{tA} v^{n,(\nicefrac{\delta_{(2n-1)}}{2})-(\nicefrac{1}{4})-\varepsilon}_{nN^m,N} \|^2_H
\\&\quad\cdot
\Bigg[
\prod^{n-1}_{i=1}
\| P e^{tA} v^{i,(\nicefrac{(\delta_{2i-1}+\delta_{2i})}{2})-\varepsilon}_{nN^m,N} \|^2_H
\Bigg]
\Bigg)
e_1
.
\end{split}
\end{equation}
Hence, we obtain that for all 
$ N,n \in \N $,
$ m \in \N_0 $, 
$r\in[0,\infty)$, 
$ \delta_1,\delta_2,\ldots,\delta_{2n-1} \in \R $, 
$ \varepsilon \in (0,\infty) $, 
$ t \in [0,T] $, 
$ s \in [0,t] $
it holds that 
\begin{equation}
\begin{split}
&
    \big\|
  e^{(t-s)A }
  B^{(2n-1)}( e^{sA} \theta^{2n}_1(\mathbf{u}^{\varepsilon,m,(\delta_1,\delta_2,\ldots,\delta_{2n-1})}_{2n-1,N}) )
  \big(
  e^{sA} \theta^{2n}_2(\mathbf{u}^{\varepsilon,m,(\delta_1,\delta_2,\ldots,\delta_{2n-1})}_{2n-1,N}),
  \\&\quad
  e^{sA} \theta^{2n}_3(\mathbf{u}^{\varepsilon,m,(\delta_1,\delta_2,\ldots,\delta_{2n-1})}_{2n-1,N}),
  \ldots,
  e^{sA}\theta^{2n}_{2n}(\mathbf{u}^{\varepsilon,m,(\delta_1,\delta_2,\ldots,\delta_{2n-1})}_{2n-1,N})
  \big)
    \big\|^2_{H_{-r}}
\\&=
  \bigg(
  \frac{N^m}{c^r}
  \bigg)^2
  e^{ -2c(t-s) }
  \Bigg[
    \frac{
      \big(
      \prod\nolimits^{n-1}_{ i=0 }
      (1-2i)
      \big)
    }{
  [ 1 + \| P e^{sA} v^{n,-\varepsilon}_{nN^m,N} \|^2_H ]^{
    ( n - 1/2 )
  }
    } \,
  \| P e^{sA} v^{n,(\nicefrac{\delta_{(2n-1)}}{2})-(\nicefrac{1}{4})-\varepsilon}_{nN^m,N} \|^2_H
\\&\quad\cdot
  \Bigg(
  \prod^{n-1}_{i=1}
  \| P e^{sA} v^{i,(\nicefrac{(\delta_{2i-1}+\delta_{2i})}{2})-\varepsilon}_{nN^m,N} \|^2_H
  \Bigg)
  \Bigg]^2
  .
\end{split}
\end{equation}
Plugging this into the right hand side of~\eqref{eq:delete.initial.odd} yields that for all 
$ N,n \in \N $,
$ m \in \N_0 $, 
$r\in[0,\infty)$, 
$ \delta_1,\delta_2,\ldots,\delta_{2n-1} \in \R $, 
$ \varepsilon \in (0,\infty) $, 
$ t \in (0,T] $
it holds that 
\begin{equation}
\begin{split}
&
  \E\Big[\big\|
    X^{ 2n-1,\mathbf{u}^{\varepsilon,m,(\delta_1,\delta_2,\ldots,\delta_{2n-1})}_{2n-1,N} }_t
  \big\|^2_{H_{-r}}\Big]
\\&\geq
  \bigg(
  \frac{N^m}{c^r}
  \bigg)^2
  \int^t_0
  e^{ -2c(t-s) }
  \Bigg[
    \frac{
      \big(
      \prod\nolimits^{n-1}_{ i=0 }
      (1-2i)
      \big)
    }{
  [ 1 + \| P e^{sA} v^{n,-\varepsilon}_{nN^m,N} \|^2_H ]^{
    ( n - 1/2 )
  }
    } \,
  \| P e^{sA} v^{n,(\nicefrac{\delta_{(2n-1)}}{2})-(\nicefrac{1}{4})-\varepsilon}_{nN^m,N} \|^2_H
\\&\quad\cdot
  \Bigg(
  \prod^{n-1}_{i=1}
  \| P e^{sA} v^{i,(\nicefrac{(\delta_{2i-1}+\delta_{2i})}{2})-\varepsilon}_{nN^m,N} \|^2_H
  \Bigg)
  \Bigg]^2
  \,\diffns{s}
\\&\geq
  \left[
    \frac{
      N^m
      \big(
      \prod\nolimits^{n-1}_{ i=0 }
      (1-2i)
      \big)
    }{
  c^r e^{ct} [ 1 + \sup_{s\in[0,T]}\| P e^{sA} v^{n,-\varepsilon}_{nN^m,N} \|^2_H ]^{
    ( n - 1/2 )
  }
    }
  \right]^2
\\&\quad\cdot
  \int^t_0
  \Bigg[
  \| P e^{sA} v^{n,(\nicefrac{\delta_{(2n-1)}}{2})-(\nicefrac{1}{4})-\varepsilon}_{nN^m,N} \|^2_H
  \Bigg(
  \prod^{n-1}_{i=1}
  \| P e^{sA} v^{i,(\nicefrac{(\delta_{2i-1}+\delta_{2i})}{2})-\varepsilon}_{nN^m,N} \|^2_H
  \Bigg)
  \Bigg]^2
  \,\diffns{s}
  .
\end{split}
\end{equation}
Combining this with~\eqref{eq:power.multiplication} ensures that for all 
$ N,n \in \N $,
$ m \in \N_0 $, 
$r\in[0,\infty)$, 
$ \delta_1,\delta_2,\ldots,\delta_{2n-1} \in \R $, 
$ \varepsilon \in (0,\infty) $, 
$ t \in (0,T] $
it holds that 
\begin{equation}
\label{eq:lb.odd}
\begin{split}
&
  \E\Big[\big\|
    X^{ 2n-1,\mathbf{u}^{\varepsilon,m,(\delta_1,\delta_2,\ldots,\delta_{2n-1})}_{2n-1,N} }_t
  \big\|^2_{H_{-r}}\Big]
\\&\geq
  \left[
    \frac{
      N^m
      \big(
      \prod\nolimits^{n-1}_{ i=0 }
      (1-2i)
      \big)
    }{
  c^r e^{ct} [ 1 + \sup_{s\in[0,T]}\| P e^{sA} v^{n,-\varepsilon}_{nN^m,N} \|^2_H ]^{
    ( n - 1/2 )
  }
    }
  \right]^2
\\&\quad\cdot
  \int^t_0
  \frac{
    e^{ -8cn^3s }
    \| e^{2n^3sA} v^{0,-(\nicefrac{1}{4})-n\varepsilon+\sum^{2n-1}_{i=1}(\nicefrac{\delta_i}{2})}_{N^m,N} \|^4_H
  }{
    (n+n^2)^{8(|(\nicefrac{\delta_{(2n-1)}}{2})-(\nicefrac{1}{4})-\varepsilon|+\sum^{n-1}_{i=1}|(\nicefrac{(\delta_{2i-1}+\delta_{2i})}{2})-\varepsilon|)}
  }
  \,\diffns{s}
\\&\geq
  \left[
    \frac{
      N^m
      \big(
      \prod\nolimits^{n-1}_{ i=0 }
      (1-2i)
      \big)
    }{
  c^r
  e^{ c(1+4n^3) t}
    (2n^2)^{(1+4n\varepsilon+2\sum^{2n-1}_{i=1}|\delta_i|)} \,
  [ 1 + \sup_{s\in[0,T]} \| P e^{sA} v^{n,-\varepsilon}_{nN^m,N} \|^2_H ]^{
    ( n - 1/2 )
  }
    }
  \right]^2
\\&\quad\cdot
  \int^t_0
  \| e^{2n^3sA} v^{0,-(\nicefrac{1}{4})-n\varepsilon+\sum^{2n-1}_{i=1}(\nicefrac{\delta_i}{2})}_{N^m,N} \|^4_H
  \,\diffns{s}
  .
\end{split}
\end{equation}
Next we estimate the right hand side of~\eqref{eq:delete.initial.even} from below.
Note that~\eqref{eq:def.u.even} 
shows that for all 
$ N, n \in \N $,
$ m \in \N_0 $, 
$ \varepsilon, \delta_1,\delta_2,
\ldots,\delta_{2n} \in \R $, 
$ t \in [0,T] $
it holds that 
\begin{equation}
\begin{split}
&
  B^{(2n)}( e^{tA} \theta^{2n+1}_1(\mathbf{u}^{\varepsilon,m,(\delta_1,\delta_2,\ldots,\delta_{2n})}_{2n,N}) )
  \big(
  e^{tA} \theta^{2n+1}_2(\mathbf{u}^{\varepsilon,m,(\delta_1,\delta_2,\ldots,\delta_{2n})}_{2n,N}),
  e^{tA} \theta^{2n+1}_3(\mathbf{u}^{\varepsilon,m,(\delta_1,\delta_2,\ldots,\delta_{2n})}_{2n,N}),
  \ldots,
  \\&\quad
  e^{tA}\theta^{2n+1}_{2n+1}(\mathbf{u}^{\varepsilon,m,(\delta_1,\delta_2,\ldots,\delta_{2n})}_{2n,N})
  \big)
  \\&=
  B^{(2n)}( e^{tA} e_1 )
  \big(
    e^{tA} v^{1,\delta_1-\varepsilon}_{nN^m,N},
    e^{tA} v^{1,\delta_2-\varepsilon}_{nN^m,N},
    e^{tA} v^{2,\delta_3-\varepsilon}_{nN^m,N},
    e^{tA} v^{2,\delta_4-\varepsilon}_{nN^m,N},
\\&\quad    
    \ldots,
    e^{tA} v^{n-1,\delta_{2n-3}-\varepsilon}_{nN^m,N},
    e^{tA} v^{n-1,\delta_{2n-2}-\varepsilon}_{nN^m,N},
    e^{tA} v^{n,\delta_{2n-1}-\varepsilon}_{nN^m,N},
    N^m \, e^{tA} v^{n,\delta_{2n}-(\nicefrac{1}{2})-\varepsilon}_{nN^m,N}
  \big)
  .
\end{split}
\end{equation}
Item~\eqref{item:B.formula} of Lemma~\ref{lem:derivative.formula} therefore ensures that for all 
$ N, n \in \N $,
$ m \in \N_0 $, 
$ \varepsilon, \delta_1,\delta_2,
\ldots,\delta_{2n} \in \R $, 
$ t \in [0,T] $
it holds that 
\begin{equation}
\label{eq:even.formula}
\begin{split}
&
  B^{(2n)}( e^{tA} \theta^{2n+1}_1(\mathbf{u}^{\varepsilon,m,(\delta_1,\delta_2,\ldots,\delta_{2n})}_{2n,N}) )
  \big(
  e^{tA} \theta^{2n+1}_2(\mathbf{u}^{\varepsilon,m,(\delta_1,\delta_2,\ldots,\delta_{2n})}_{2n,N}),
  e^{tA} \theta^{2n+1}_3(\mathbf{u}^{\varepsilon,m,(\delta_1,\delta_2,\ldots,\delta_{2n})}_{2n,N}),
  \ldots,
  \\&\quad
  e^{tA}\theta^{2n+1}_{2n+1}(\mathbf{u}^{\varepsilon,m,(\delta_1,\delta_2,\ldots,\delta_{2n})}_{2n,N})
  \big)
\\&=
  \Bigg(
  {\sum\limits_{
    \varpi \in \Pi_{2n}
  }}
  \Bigg(
  \frac{
    \big(
      {\prod^{ \#_\varpi - 1 }_{ i=0 }}
      (1-2i)
    \big)
  }{
  \big[ 1 + \| P e^{tA} e_1 \|^2_H \big]^{
    ( \#_\varpi - \nicefrac{1}{2} )
  }
  }
\\&\cdot
  \Bigg[
  \prod_{ I \in \varpi }
  \bigg(
  \Big<
    \mathbbm{1}_{\{1\}}( \#_I ) \, P e^{tA} e_1 
    ,
    N^{m\mathbbm{1}_{\{2n\}}(\min(I))}
    e^{tA} v^{\lceil \nicefrac{\min(I)}{2} \rceil,\delta_{\min(I)}-(\nicefrac{1}{2})\mathbbm{1}_{\{2n\}}(\min(I))-\varepsilon}_{nN^m,N}
  \Big>_H
\\&+
  \Big<
    \mathbbm{1}_{\{2\}}( \#_I ) \, P \big( N^{m\mathbbm{1}_{\{2n\}}(\max(I))} e^{tA} v^{\lceil \nicefrac{\max(I)}{2} \rceil,\delta_{\max(I)}-(\nicefrac{1}{2})\mathbbm{1}_{\{2n\}}(\max(I))-\varepsilon}_{nN^m,N}
    \big)
    ,
    e^{tA} v^{\lceil \nicefrac{\min(I)}{2} \rceil,\delta_{\min(I)}-\varepsilon}_{nN^m,N}
  \Big>_H
  \bigg)
  \Bigg]
  \Bigg)
  \Bigg)
  e_1
\\&=
  \Bigg(
  {\sum\limits_{
    \substack{\varpi \in \Pi_{2n}, \, \forall \, I \in \varpi \colon \#_I = 2}
  }}
  \Bigg(
    \Bigg[
      {\prod^{ \#_\varpi - 1 }_{ i=0 }}
      (1-2i)
    \Bigg]
\\&\cdot
  \Bigg[
  \prod_{\substack{ I \in \varpi }}
  \Big<
    P \big( N^{m\mathbbm{1}_{\{2n\}}(\max(I))} e^{tA} v^{\lceil \nicefrac{\max(I)}{2} \rceil,\delta_{\max(I)}-(\nicefrac{1}{2})\mathbbm{1}_{\{2n\}}(\max(I))-\varepsilon}_{nN^m,N}
    \big)
    ,
    e^{tA} v^{\lceil \nicefrac{\min(I)}{2} \rceil,\delta_{\min(I)}-\varepsilon}_{nN^m,N}
  \Big>_H
  \Bigg]
  \Bigg)
  \Bigg)
  e_1
  .
\end{split}
\end{equation}
This and~\eqref{eq:self.adjoint.0}
assure that for all 
$ N, n \in \N $,
$ m \in \N_0 $, 
$ \varepsilon, \delta_1,\delta_2,
\ldots,\delta_{2n} \in \R $, 
$ t \in [0,T] $
it holds that 
\begin{equation}
\begin{split}
&
  B^{(2n)}( e^{tA} \theta^{2n+1}_1(\mathbf{u}^{\varepsilon,m,(\delta_1,\delta_2,\ldots,\delta_{2n})}_{2n,N}) )
  \big(
  e^{tA} \theta^{2n+1}_2(\mathbf{u}^{\varepsilon,m,(\delta_1,\delta_2,\ldots,\delta_{2n})}_{2n,N}),
  e^{tA} \theta^{2n+1}_3(\mathbf{u}^{\varepsilon,m,(\delta_1,\delta_2,\ldots,\delta_{2n})}_{2n,N}),
  \ldots,
  \\&\quad
  e^{tA}\theta^{2n+1}_{2n+1}(\mathbf{u}^{\varepsilon,m,(\delta_1,\delta_2,\ldots,\delta_{2n})}_{2n,N})
  \big)
\\&=
\Bigg(
\sum_{
	\varpi \in \Pi_{2n}, \,
	\varpi = \{ \{1,2\}, \{3,4\}, \ldots, \{2n-3,2n-2\}, \{2n-1,2n\} \}
}
	\Bigg(
	\Bigg[
	{\prod\limits^{ \#_\varpi - 1 }_{ i=0 }
	(1-2i)}
	\Bigg]
\\ & \quad \cdot
\big<
P\big( N^m e^{tA} v^{n,\delta_{2n}-(\nicefrac{1}{2})-\varepsilon}_{nN^m,N} \big)
,
e^{tA} v^{n,\delta_{2n-1}-\varepsilon}_{nN^m,N}
\big>_H
\Bigg[
\prod^{n-1}_{ i=1 }
\big<
P e^{tA} v^{i,\delta_{2i}-\varepsilon}_{nN^m,N}
,
e^{tA} v^{i,\delta_{2i-1}-\varepsilon}_{nN^m,N}
\big>_H
\Bigg]
\Bigg)
\Bigg) \,
e_1
.
\end{split}
\end{equation}
Furthermore, identity~\eqref{eq:self.adjoint}
implies that for all 
$ N, n \in \N $,
$ m \in \N_0 $, 
$ \varepsilon, \delta_1,\delta_2,
\ldots,\delta_{2n} \in \R $, 
$ t \in [0,T] $
it holds that 
\begin{equation}
\label{eq:even.simplification}
\begin{split}
&
B^{(2n)}( e^{tA} \theta^{2n+1}_1(\mathbf{u}^{\varepsilon,m,(\delta_1,\delta_2,\ldots,\delta_{2n})}_{2n,N}) )
\big(
e^{tA} \theta^{2n+1}_2(\mathbf{u}^{\varepsilon,m,(\delta_1,\delta_2,\ldots,\delta_{2n})}_{2n,N}),
e^{tA} \theta^{2n+1}_3(\mathbf{u}^{\varepsilon,m,(\delta_1,\delta_2,\ldots,\delta_{2n})}_{2n,N}),
\ldots,
\\&\quad
e^{tA}\theta^{2n+1}_{2n+1}(\mathbf{u}^{\varepsilon,m,(\delta_1,\delta_2,\ldots,\delta_{2n})}_{2n,N})
\big)
\\&=
N^m \, \Bigg(
\left[
\prod^{n-1}_{ i=0 }
(1-2i)
\right]
\| P e^{tA} v^{n,(\nicefrac{(\delta_{2n-1}+\delta_{2n})}{2})-(\nicefrac{1}{4})-\varepsilon}_{nN^m,N} \|^2_H
\Bigg[
\prod^{n-1}_{i=1}
\| P e^{tA} v^{i,(\nicefrac{(\delta_{2i-1}+\delta_{2i})}{2})-\varepsilon}_{nN^m,N} \|^2_H
\Bigg]
\Bigg) \,
e_1.
\end{split}
\end{equation}
We therefore obtain that for all
$ N,n \in \N $,
$ m \in \N_0 $, 
$r\in[0,\infty)$, 
$ \delta_1,\delta_2,\ldots,\delta_{2n} \in \R $, 
$ \varepsilon \in (0,\infty) $, 
$ t \in [0,T] $, 
$ s \in [0,t] $
it holds that 
\begin{equation}
\begin{split}
&
    \big\|
  e^{(t-s)A }
  B^{(2n)}( e^{sA} \theta^{2n+1}_1(\mathbf{u}^{\varepsilon,m,(\delta_1,\delta_2,\ldots,\delta_{2n})}_{2n,N}) )
  \big(
  e^{sA} \theta^{2n+1}_2(\mathbf{u}^{\varepsilon,m,(\delta_1,\delta_2,\ldots,\delta_{2n})}_{2n,N}),
\\&\quad
  e^{sA} \theta^{2n+1}_3(\mathbf{u}^{\varepsilon,m,(\delta_1,\delta_2,\ldots,\delta_{2n})}_{2n,N}),
  \ldots,
  e^{sA}\theta^{2n+1}_{2n+1}(\mathbf{u}^{\varepsilon,m,(\delta_1,\delta_2,\ldots,\delta_{2n})}_{2n,N})
  \big)
    \big\|^2_{H_{-r}}
\\&=
  \bigg[
  \frac{N^m}{c^r}
  \bigg]^2
  e^{ -2c(t-s) }
  \Bigg[
    \left(
      \prod^{n-1}_{ i=0 }
      (1-2i)
    \right)
  \| P e^{sA} v^{n,\nicefrac{((\delta_{2n-1}+\delta_{2n})}{2})-(\nicefrac{1}{4})-\varepsilon}_{nN^m,N} \|^2_H
\\&\quad\cdot
  \Bigg(
  \prod^{n-1}_{i=1}
  \| P e^{sA} v^{i,\nicefrac{((\delta_{2i-1}+\delta_{2i})}{2})-\varepsilon}_{nN^m,N} \|^2_H
  \Bigg)\Bigg]^2
  .
\end{split}
\end{equation}
Plugging this into the right hand side of~\eqref{eq:delete.initial.even} yields that for all 
$ N,n \in \N $,
$ m \in \N_0 $, 
$r\in[0,\infty)$, 
$ \delta_1,\delta_2,\ldots,\delta_{2n} \in \R $, 
$ \varepsilon \in (0,\infty) $, 
$ t \in [0,T] $
it holds that 
\begin{equation}
\begin{split}
&
  \E\Big[\big\|
    X^{ 2n,\mathbf{u}^{\varepsilon,m,(\delta_1,\delta_2,\ldots,\delta_{2n})}_{2n,N} }_t
  \big\|^2_{H_{-r}}\Big]
\\&\geq
  \bigg[
  \frac{N^m}{c^r}
  \bigg]^2
  \int^t_0
  e^{ -2c(t-s) }
  \Bigg[
    \left(
      \prod^{n-1}_{ i=0 }
      (1-2i)
    \right)
  \| P e^{sA} v^{n,\nicefrac{((\delta_{2n-1}+\delta_{2n})}{2})-(\nicefrac{1}{4})-\varepsilon}_{nN^m,N} \|^2_H
\\&\quad\cdot
  \Bigg(
  \prod^{n-1}_{i=1}
  \| P e^{sA} v^{i,\nicefrac{((\delta_{2i-1}+\delta_{2i})}{2})-\varepsilon}_{nN^m,N} \|^2_H
  \Bigg)\Bigg]^2
  \,\diffns{s}
\\&\geq
  \left[
  \frac{
      N^m
      \big(
      \prod\nolimits^{n-1}_{ i=0 }
      (1-2i)
      \big)
  }{
    c^r e^{ ct }
  }
  \right]^2
\\&\quad\cdot
  \int^t_0
  \left[
  \| P e^{sA} v^{n,\nicefrac{((\delta_{2n-1}+\delta_{2n})}{2})-(\nicefrac{1}{4})-\varepsilon}_{nN^m,N} \|^2_H
  \Bigg(
  \prod^{n-1}_{i=1}
  \| P e^{sA} v^{i,(\nicefrac{(\delta_{2i-1}+\delta_{2i})}{2})-\varepsilon}_{nN^m,N} \|^2_H
  \Bigg)
  \right]^2
  \,\diffns{s}
  .
\end{split}
\end{equation}
This and~\eqref{eq:power.multiplication} assure that for all 
$ N,n \in \N $,
$ m \in \N_0 $, 
$r\in[0,\infty)$, 
$ \delta_1,\delta_2,\ldots,\delta_{2n} \in \R $, 
$ \varepsilon \in (0,\infty) $, 
$ t \in [0,T] $
it holds that 
\begin{equation}
\label{eq:lb.even}
\begin{split}
&
  \E\Big[\big\|
    X^{ 2n,\mathbf{u}^{\varepsilon,m,(\delta_1,\delta_2,\ldots,\delta_{2n})}_{2n,N} }_t
  \big\|^2_{H_{-r}}\Big]
\\&\geq
  \left[
  \frac{
      N^m
      \big(
      \prod\nolimits^{n-1}_{ i=0 }
      (1-2i)
      \big)
  }{
    c^r e^{ ct }
  }
  \right]^2
  \int^t_0
  \frac{
    e^{-8cn^3s}
    \| e^{2n^3sA} v^{0,-(\nicefrac{1}{4})-n\varepsilon+\sum^{2n}_{i=1}(\nicefrac{\delta_i}{2})}_{N^m,N} \|^4_H
  }{
    (n+n^2)^{8(|(\nicefrac{(\delta_{2n-1}+\delta_{2n})}{2})-(\nicefrac{1}{4})-\varepsilon|+\sum^{n-1}_{i=1}|(\nicefrac{(\delta_{2i-1}+\delta_{2i})}{2})-\varepsilon|)}
  }
  \,\diffns{s}
\\&\geq
  \left[
  \frac{
      N^m
      \big(
      \prod\nolimits^{n-1}_{ i=0 }
      (1-2i)
      \big)
  }{
    c^r e^{ c(1+4n^3)t }
    (2n^2)^{(1+4n\varepsilon+2\sum^{2n}_{i=1}|\delta_i|)}
  }
  \right]^2
  \int^t_0
  \| e^{2n^3sA} v^{0,-(\nicefrac{1}{4})-n\varepsilon+\sum^{2n}_{i=1}(\nicefrac{\delta_i}{2})}_{N^m,N} \|^4_H
  \,\diffns{s}
  .
\end{split}
\end{equation}
This and~\eqref{eq:lb.odd} yield that for all 
$ N, n \in \N $, 
$r\in[0,\infty)$, 
$ \delta_1,\delta_2,\ldots,\delta_n \in \R $, 
$ \varepsilon \in (0,\nicefrac{1}{4}) $, 
$m\in\N_0\cap[\frac{1}{4\varepsilon}-1,\infty)$,
$ t \in (0,T] $ 
it holds that 
\begin{equation}
\label{eq:pre.combined.lb}
\begin{split}
&
  \E\Big[\big\|
    X^{ n,\mathbf{u}^{\varepsilon,m,(\delta_1,\delta_2,\ldots,\delta_n)}_{n,N} }_t
  \big\|^2_{H_{-r}}\Big]
\\&\geq
  \left[
    \frac{
      N^m
      \big(
      \prod\nolimits^{ \lceil \nicefrac{n}{2} \rceil - 1 }_{ i=0 }
      (1-2i)
      \big)
    }{
  c^r e^{ c(1+4|\lceil n/2 \rceil|^3) t}
  (2|\lceil n/2 \rceil|^2)^{(1+4\lceil n/2 \rceil\varepsilon+2\sum^n_{i=1}|\delta_i|)} \,
  [ 1 + \sup_{s\in[0,T]} \| P e^{sA} v^{\lceil n/2 \rceil,-\varepsilon}_{\lceil n/2 \rceil N^m,N} \|^2_H ]^{
    ( \lceil \nicefrac{n}{2} \rceil - \nicefrac{1}{2} )
  }
    }
  \right]^2
\\&\quad\cdot
  \int^t_0
  \big\| e^{2|\lceil\nicefrac{n}{2}\rceil|^3sA} v^{0,-(\nicefrac{1}{4})-\lceil\nicefrac{n}{2}\rceil\varepsilon+\sum^n_{i=1}(\nicefrac{\delta_i}{2})}_{N^m,N} \big\|^4_H
  \,\diffns{s}
  .
\end{split}
\end{equation}
Moreover, note that for all 
$N,n\in\N$, 
$i\in\{1,2,\ldots,n\}$,
$\varepsilon\in(0,\frac{1}{4})$,
$m\in\N_0\cap[\frac{1}{4\varepsilon}-1,\infty)$,
$t\in[0,T]$ 
it holds that 
\begin{equation}
\begin{split}
&
  \|P e^{tA} v^{i,-\varepsilon}_{nN^m,N}\|^2_H
=
  \Big\|
  P e^{tA} (-A)^{-\varepsilon}
  \Big[
  {\smallsum^N_{j=1}e_{i+jnN^m}}
  \Big]
  \Big\|^2_H
\\&=
  \Big\|
  {\smallsum^N_{j=1}}
  \Big[
  e^{tA} (-A)^{-\varepsilon} e_{i+jnN^m}
  \Big]
  \Big\|^2_H
  =
  \sum^N_{j=1}
  \|
  e^{tA} (-A)^{-\varepsilon} e_{i+jnN^m}
  \|^2_H
\\&=
  \frac{1}{c^{2\varepsilon}}
  \left[
  \sum^N_{j=1}
  \frac{e^{-2tc(i+jnN^m)^2}}{
    (i+jnN^m)^{4\varepsilon}
  }
\right]
  \leq
  \frac{1}{c^{2\varepsilon}}
  \left[
  \sum^N_{j=1}
  \frac{1}{
    (jN^m)^{4\varepsilon}
  }
  \right]
  =
  \frac{1}{c^{2\varepsilon} N^{4m\varepsilon}}
  \left[
  1+
  \sum^N_{j=2}
  \frac{1}{
    j^{4\varepsilon}
  }
  \right]
\\&=
  \frac{1}{c^{2\varepsilon} N^{4m\varepsilon}}
  \bigg(
  1+
  \sum^N_{j=2}
  \int^j_{j-1}
  \frac{1}{
    j^{4\varepsilon}
  } \,
  dx
  \bigg)
  \leq
  \frac{1}{c^{2\varepsilon} N^{4m\varepsilon}}
  \bigg(
  1+
  \sum^N_{j=2}
  \int^j_{j-1}
  \frac{1}{
    x^{4\varepsilon}
  } \,
  dx
  \bigg)
\\&=
  \frac{1}{c^{2\varepsilon} N^{4m\varepsilon}}
  \bigg(
  1+
  \int^N_1
  \frac{1}{
    x^{4\varepsilon}
  } \,
  dx
  \bigg)
  =
  \frac{1}{c^{2\varepsilon} N^{4m\varepsilon}}
  \bigg(
  1+
  \frac{1}{(1-4\varepsilon)}
  \big(
  N^{(1-4\varepsilon)}-1
  \big)
  \bigg)
\\&=
  \frac{1}{c^{2\varepsilon}}
  \bigg(
  \frac{1}{N^{4m\varepsilon}}+
  \frac{1}{(1-4\varepsilon)}
  \bigg[
  N^{(1-4\varepsilon-4m\varepsilon)}-\frac{1}{N^{4m\varepsilon}}
  \bigg]
  \bigg)
  .
\end{split}
\end{equation}
This and the fact that
$
  \forall \,
  \varepsilon\in(0,\frac{1}{4}), \,
  m\in\N_0\cap[\frac{1}{4\varepsilon}-1,\infty)
  \colon
  1-4\varepsilon-4m\varepsilon\leq 0
$
ensure that for all 
$N,n\in\N$, 
$i\in\{1,2,\ldots,n\}$,
$\varepsilon\in(0,\frac{1}{4})$,
$m\in\N_0\cap[\frac{1}{4\varepsilon}-1,\infty)$
it holds that 
\begin{equation}
\label{eq:integrand.component.finite}
\begin{split}
  \sup_{t\in[0,T]}
  \|P e^{tA} v^{i,-\varepsilon}_{nN^m,N}\|^2_H
\leq
  \frac{1}{c^{2\varepsilon}}
  \bigg(
  \frac{1}{N^{4m\varepsilon}}+
  \frac{1}{(1-4\varepsilon)}-
  \frac{1}{(1-4\varepsilon)N^{4m\varepsilon}}
  \bigg)
  \leq
  \frac{1}{c^{2\varepsilon}(1-4\varepsilon)}
  .
\end{split}
\end{equation}
Plugging~\eqref{eq:integrand.component.finite} into the right hand side of~\eqref{eq:pre.combined.lb} yields that for all 
$ N, n \in \N $, 
$r\in[0,\infty)$, 
$ \delta_1,\delta_2,\ldots,\delta_n \in \R $, 
$ \varepsilon \in (0,\nicefrac{1}{4}) $, 
$m\in\N_0\cap[\frac{1}{4\varepsilon}-1,\infty)$,
$ t \in (0,T] $ 
it holds that 
\begin{equation}
\label{eq:combined.lb}
\begin{split}
&
  \E\Big[\big\|
    X^{ n,\mathbf{u}^{\varepsilon,m,(\delta_1,\delta_2,\ldots,\delta_n)}_{n,N} }_t
  \big\|^2_{H_{-r}}\Big]
\\&\geq
  \left[
    \frac{
      N^m
      \big(
      \prod\nolimits^{ \lceil \nicefrac{n}{2} \rceil - 1 }_{ i=0 }
      (1-2i)
      \big)
    }{
  c^r e^{ c(1+4|\lceil n/2 \rceil|^3) t}
  (2|\lceil n/2 \rceil|^2)^{(1+4\lceil n/2 \rceil\varepsilon+2\sum^n_{i=1}|\delta_i|)} \,
  [ 1 + \frac{1}{c^{2\varepsilon}(1-4\varepsilon)} ]^{
    ( \lceil \nicefrac{n}{2} \rceil - \nicefrac{1}{2} )
  }
    }
  \right]^2
\\&\quad\cdot
  \int^t_0
  \big\| e^{2|\lceil\nicefrac{n}{2}\rceil|^3sA} v^{0,-(\nicefrac{1}{4})-\lceil\nicefrac{n}{2}\rceil\varepsilon+\sum^n_{i=1}(\nicefrac{\delta_i}{2})}_{N^m,N} \big\|^4_H
  \,\diffns{s}
  .
\end{split}
\end{equation}
Next note that for all 
$ N,l \in \N $, 
$ m \in \N_0 $, 
$ \varepsilon \in (0,\infty) $, 
$ \delta \in [\nicefrac{1}{2}+2l\varepsilon,\infty) $, 
$ t \in (0,T] $
it holds that 
\begin{equation}
\begin{split}
&
  \int^t_0
  \big\| e^{2l^3sA} v^{0,-(\nicefrac{1}{4})-l\varepsilon+(\nicefrac{\delta}{2})}_{N^m,N} \big\|^4_H
  \,\diffns{s}
  =
  \int_0^t
  \left\|
  e^{ 2l^3sA }
  (-A)^{-(\nicefrac{1}{4})-l\varepsilon+(\nicefrac{\delta}{2})}
  \Big(
  {\smallsum_{ j = 1 }^N}
  e_{jN^m}
  \Big)
  \right\|^4_H
  ds
\\&=
  \int_0^t
  \left[
  \left\|
  {\smallsum_{ j = 1 }^N}
  \big[
  e^{ 2l^3sA }
  (-A)^{-(\nicefrac{1}{4})-l\varepsilon+(\nicefrac{\delta}{2})} e_{jN^m}
  \big]
  \right\|^2_H
  \right]^2
  ds
\\&=
  \int_0^t
  \left[
  \sum_{ j = 1 }^N
  \left\|
  e^{ 2l^3sA }
  (-A)^{-(\nicefrac{1}{4})-l\varepsilon+(\nicefrac{\delta}{2})} e_{jN^m}
  \right\|^2_H
  \right]^2
  ds
\\&=
  \int_0^t
  \left[
    {\sum_{ j = 1 }^N}
    \left(
      e^{ -2l^3cj^2N^{2m} s }
      \big\| (-A)^{-(\nicefrac{1}{4})-l\varepsilon+(\nicefrac{\delta}{2})} e_{jN^m} \big\|_H
    \right)^2
  \right]^2
  ds
\\ & =
  \sum_{ j, k = 1 }^N
      \| (-A)^{-(\nicefrac{1}{4})-l\varepsilon+(\nicefrac{\delta}{2})} e_{jN^m} \|^2_H \,
      \| (-A)^{-(\nicefrac{1}{4})-l\varepsilon+(\nicefrac{\delta}{2})} e_{kN^m} \|^2_H
  \left[
  \int_0^t
    e^{ -4l^3c ( j^2 + k^2 )N^{2m} s }
    \,
  ds
  \right]
\\ & =
  \sum_{ j, k = 1 }^{ N }
    \left(
    \left[
    \frac{
      (
        1 - e^{ -4l^3c (j^2+k^2) N^{2m} t }
      )
    }{
      4l^3c^{(2+4l\varepsilon-2\delta)} (j^2 + k^2) N^{2m}
    } 
    \right]
    (jN^m)^{ ( 2\delta -1 -4l\varepsilon ) }
    \,
    (kN^m)^{ ( 2\delta -1 -4l\varepsilon ) }
    \right)
  .
\end{split}
\end{equation}
The fact that 
$
  \forall \,
  l \in \N, \,
  \varepsilon \in (0,\infty), \,
  \delta \in [\nicefrac{1}{2}+2l\varepsilon,\infty)
  \colon
  2\delta-1-4l\varepsilon \geq 0
$
therefore assures that for all 
$ N,l \in \N $, 
$ m \in \N_0 $, 
$ \varepsilon \in (0,\infty) $, 
$ \delta \in [\nicefrac{1}{2}+2l\varepsilon,\infty) $, 
$ t \in (0,T] $
it holds that 
\begin{equation}
\label{eq:lb.integral}
\begin{split}
  \int^t_0
  \big\| e^{2l^3sA} v^{0,-(\nicefrac{1}{4})-l\varepsilon+(\nicefrac{\delta}{2})}_{N^m,N} \big\|^4_H
  \,\diffns{s}
&\geq
  \sum_{ j, k = 1 }^{ N }
    \frac{
      (
        1 - e^{ -4l^3c (j^2+k^2) N^{2m} t }
      )
    }{
      4l^3c^{(2+4l\varepsilon-2\delta)} (j^2 + k^2) N^{2m}
    } 
\\&\geq
  \frac{
      (
        1 - e^{ -ct }
      )
  }{ 4l^3c^{(2+4l\varepsilon-2\delta)} N^{2m}}
  \left[
  \sum_{ j, k = 1 }^N
    \frac{1}{
      (j^2+k^2)
    }
  \right]
  .
\end{split}
\end{equation}
This and~\eqref{eq:combined.lb} imply that for all 
$ n \in \N $, 
$r\in[0,\infty)$, 
$ \delta_1,\delta_2,\ldots,\delta_n \in \R $, 
$ \varepsilon \in (0,\nicefrac{1}{4}) $, 
$m\in\N_0\cap[\frac{1}{4\varepsilon}-1,\infty)$,
$ t \in (0,T] $ 
with 
$
  \sum^n_{i=1}
  \delta_i
  \geq
  \frac{1}{2}
  +
  2\lceil\nicefrac{n}{2}\rceil\varepsilon
$
it holds that 
\begin{equation}
\label{eq:numerator.infinite}
\begin{split}
&
  \sup_{N\in\N}
  \big(\E\big[\|
    X^{ n,\mathbf{u}^{\varepsilon,m,(\delta_1,\delta_2,\ldots,\delta_n)}_{n,N} }_t
  \|^2_{H_{-r}}\big]\big)^{\nicefrac{1}{2}}
\\&\geq
    \sup_{N\in\N}\bigg(
    \frac{
      N^m\prod\nolimits^{ \lceil \nicefrac{n}{2} \rceil - 1 }_{ i=0 }
      |1-2i|
    }{
  c^r e^{ c(1+4|\lceil n/2 \rceil|^3) t}
  (2|\lceil n/2 \rceil|^2)^{(1+4\lceil n/2 \rceil\varepsilon+2\sum^n_{i=1}|\delta_i|)}
  [ 1 + \frac{1}{c^{2\varepsilon}(1-4\varepsilon)} ]^{
    ( \lceil \nicefrac{n}{2} \rceil - \nicefrac{1}{2} )
  }
    }
\\&\quad\cdot
  \bigg[
  \frac{
      (
        1 - e^{ -ct }
      )
  }{ 4|\lceil\nicefrac{n}{2}\rceil|^3c^{(2+4\lceil\nicefrac{n}{2}\rceil\varepsilon-2\sum^n_{i=1}\delta_i)} N^{2m} } 
  \sum_{ j, k = 1 }^N
    \frac{1}{
      (j^2+k^2)
    }
  \bigg]^{\nicefrac{1}{2}}
  \bigg)
\\&=
    \frac{
      \prod\nolimits^{ \lceil \nicefrac{n}{2} \rceil - 1 }_{ i=0 }
      |1-2i|
    }{
  c^r e^{ c(1+4|\lceil n/2 \rceil|^3) t}
  (2|\lceil n/2 \rceil|^2)^{(1+4\lceil n/2 \rceil\varepsilon+2\sum^n_{i=1}|\delta_i|)}
  [ 1 + \frac{1}{c^{2\varepsilon}(1-4\varepsilon)} ]^{
  	( \lceil \nicefrac{n}{2} \rceil - \nicefrac{1}{2} )
  }
    }
\\&\quad\cdot
  \bigg[
  \frac{
      (
        1 - e^{ -ct }
      )
  }{ 4|\lceil\nicefrac{n}{2}\rceil|^3c^{(2+4\lceil\nicefrac{n}{2}\rceil\varepsilon-2\sum^n_{i=1}\delta_i)} } 
  \sum_{ j, k = 1 }^\infty
    \frac{1}{
      (j^2+k^2)
    }
  \bigg]^{\nicefrac{1}{2}}
  =
  \infty
  .
\end{split}
\end{equation}
Next note that~\eqref{eq:def.u.even} and~\eqref{eq:def.u.odd.combined} ensure that for all 
$ n \in \N $, 
$ \delta_1,\delta_2,\ldots,\delta_n \in \R $, 
$ \varepsilon \in (0,\nicefrac{1}{4}) $, 
$m\in\N_0\cap[\frac{1}{4\varepsilon}-1,\infty)$
it holds that 
\begin{equation}
\label{eq:project.multiplication}
\begin{split}
&
  \sup_{N\in\N}
  \left[
  \prod^n_{i=1}
  \| \theta^{n+1}_{i+1}(\mathbf{u}^{\varepsilon,m,(\delta_1,\delta_2,\ldots,\delta_n)}_{n,N}) \|_{H_{-\delta_i}}
  \right]
\\&=
  \sup_{N\in\N}
  \bigg(
  N^m \,
  \| v^{\lceil \nicefrac{n}{2} \rceil,\delta_n-(\nicefrac{1}{2})-\varepsilon}_{\lceil \nicefrac{n}{2} \rceil N^m,N} \|_{H_{-\delta_n}}
  \prod^{n-1}_{i=1}
  \| v^{\lceil \nicefrac{i}{2} \rceil,\delta_i-\varepsilon}_{\lceil \nicefrac{n}{2} \rceil N^m,N} \|_{H_{-\delta_i}}
  \bigg)
\\&\leq
  \bigg[
  \sup_{N\in\N}
  \Big(
  N^m \,
  \| v^{\lceil \nicefrac{n}{2} \rceil,\delta_n-(\nicefrac{1}{2})-\varepsilon}_{\lceil \nicefrac{n}{2} \rceil N^m,N} \|_{H_{-\delta_n}}
  \Big)\bigg] \,
  \prod^{n-1}_{i=1}
  \bigg[
  \sup_{N\in\N}
  \| v^{\lceil \nicefrac{i}{2} \rceil,\delta_i-\varepsilon}_{\lceil \nicefrac{n}{2} \rceil N^m,N} \|_{H_{-\delta_i}}
  \bigg]
\\&=
  \bigg[
  \sup_{N\in\N}
  \Big(
  N^{2m} \,
  \| v^{\lceil \nicefrac{n}{2} \rceil,\delta_n-(\nicefrac{1}{2})-\varepsilon}_{\lceil \nicefrac{n}{2} \rceil N^m,N} \|^2_{H_{-\delta_n}}
  \Big)
  \bigg]^{\nicefrac{1}{2}} \,
  \prod^{n-1}_{i=1}
  \bigg[
  \sup_{N\in\N}
  \| v^{\lceil \nicefrac{i}{2} \rceil,\delta_i-\varepsilon}_{\lceil \nicefrac{n}{2} \rceil N^m,N} \|^2_{H_{-\delta_i}}
  \bigg]^{\nicefrac{1}{2}}
  .
\end{split}
\end{equation}
Furthermore, observe that~\eqref{eq:def.v} shows that for all 
$n\in\N$, 
$m\in\N_0$, 
$\delta\in\R$, 
$\varepsilon\in(0,\infty)$
it holds that 
\begin{equation}
\label{eq:big.norm}
\begin{split}
&
  \sup_{N\in\N}
  \Big(
  N^{2m} \,
  \|v^{n,\delta-(\nicefrac{1}{2})-\varepsilon}_{nN^m,N}\|^2_{H_{-\delta}}
  \Big)
  =
  \sup_{N\in\N}
  \Big(
  N^{2m} \,
  \|v^{n,-(\nicefrac{1}{2})-\varepsilon}_{nN^m,N}\|^2_H
  \Big)
\\&=
  \sup_{N\in\N}
  \bigg(
  N^{2m} \,
  \bigg\|
  {\smallsum\limits^N_{j=1}}
  \big[
    (-A)^{-(\nicefrac{1}{2})-\varepsilon} e_{n+jnN^m}
  \big]
  \bigg\|^2_H
  \bigg)
\\&=
  \sup_{N\in\N}
  \left(
  \frac{N^{2m}}{
    c^{(1+2\varepsilon)}
  }
  \Bigg[
  \sum^N_{j=1}
  \frac{1}{
    (n+jnN^m)^{(2+4\varepsilon)}
  }
  \Bigg]
  \right)
\leq
  \sup_{N\in\N}
  \Bigg(
  \frac{N^{2m}}{
    c^{(1+2\varepsilon)}
  }
  \Bigg[
  \sum^\infty_{j=1}
  \frac{1}{
    (jN^m)^{(2+4\varepsilon)}
  }
  \Bigg]
  \Bigg)
\\&=
  \sup_{N\in\N}
  \Bigg(
  \frac{1}{
    N^{4m\varepsilon} \,
    c^{(1+2\varepsilon)}
  }
  \Bigg[
  \sum^\infty_{j=1}
  \frac{1}{
    j^{(2+4\varepsilon)}
  }
  \Bigg]
  \Bigg)
=
  \frac{1}{
    c^{(1+2\varepsilon)}
  }
  \Bigg[
  \sum^\infty_{j=1}
  \frac{1}{
    j^{(2+4\varepsilon)}
  }
  \Bigg]
  < \infty.
\end{split}
\end{equation}
In addition, note that~\eqref{eq:def.v} and~\eqref{eq:integrand.component.finite} imply that for all 
$n\in\N$,
$i\in\{1,2,\ldots,n\}$, 
$\delta\in\R$, 
$\varepsilon\in(0,\frac{1}{4})$, 
$m\in\N_0\cap[\frac{1}{4\varepsilon}-1,\infty)$
it holds that 
\begin{equation}
\label{eq:small.norm}
\begin{split}
&
  \sup_{N\in\N}
  \|v^{i,\delta-\varepsilon}_{nN^m,N}\|^2_{H_{-\delta}}
  =
  \sup_{N\in\N}
  \|v^{i,-\varepsilon}_{nN^m,N}\|^2_H
=
  \sup_{N\in\N}
  \|Pv^{i,-\varepsilon}_{nN^m,N}\|^2_H
  \leq
  \frac{1}{c^{2\varepsilon}(1-4\varepsilon)}
  < \infty
  .
\end{split}
\end{equation}
Combining~\eqref{eq:project.multiplication} with~\eqref{eq:big.norm} and~\eqref{eq:small.norm} yields that for all 
$ n \in \N $, 
$ \delta_1,\delta_2,\ldots,\delta_n \in \R $, 
$ \varepsilon \in (0,\nicefrac{1}{4}) $, 
$m\in\N_0\cap[\frac{1}{4\varepsilon}-1,\infty)$
it holds that 
\begin{equation}
\label{eq:denom.finite}
\begin{split}
&
  \sup_{N\in\N}
  \left[
  \prod^n_{i=1}
  \| \theta^{n+1}_{i+1}(\mathbf{u}^{\varepsilon,m,(\delta_1,\delta_2,\ldots,\delta_n)}_{n,N}) \|_{H_{-\delta_i}}
  \right]
  < \infty.
\end{split}
\end{equation}
Moreover, note that for all 
$ N,n \in \N $, 
$ k \in \N_0 $, 
$ r \in \R $ 
it holds that 
$
v^{k,r}_{n,N}
\in \operatorname{span}(\{e_m\colon m\in\N\}) \setminus \{0\}
$. 
This and the fact that 
$
\operatorname{span}(\{e_n\colon n\in\N\}) 
\subseteq
\cap_{r\in\R} H_r
$
ensure that for all 
$ N,n\in\N $, 
$ m \in \N_0 $, 
$ \varepsilon \in \R $, 
$ \boldsymbol{\delta} \in \R^n $ 
it holds that 
\begin{equation}
\label{eq:nonzero.vector}
\mathbf{u}^{\varepsilon,m,\boldsymbol{\delta}}_{n,N}
\in \big( (\cap_{r\in\R} H_r) \setminus \{0\} \big)^{n+1}.
\end{equation}
Combining this with~\eqref{eq:denom.finite} assures that for all 
$ n \in \N $, 
$ \boldsymbol{\delta} \in \R^n $, 
$ \varepsilon \in (0,\frac{1}{4}) $, 
$m\in\N_0\cap[\frac{1}{4\varepsilon}-1,\infty)$
it holds that 
\begin{equation}
\label{eq:denom.convergence}
  \inf_{N\in\N}
  \left[
  \frac{1}{
  \prod^n_{i=1}
  \| \theta^{n+1}_{i+1}(\mathbf{u}^{\varepsilon,m,\boldsymbol{\delta}}_{n,N}) \|_{H_{-\delta_i}}
  }
  \right]
  =
  \frac{1}{
    \big[
    \sup_{N\in\N}
  \big(
  \prod^n_{i=1}
  \| \theta^{n+1}_{i+1}(\mathbf{u}^{\varepsilon,m,\boldsymbol{\delta}}_{n,N}) \|_{H_{-\delta_i}}
  \big)
  \big]
  }
  \in(0,\infty).
\end{equation}
This, \eqref{eq:numerator.infinite}, and~\eqref{eq:nonzero.vector} show that for all 
$ n \in \N $, 
$ q \in [0,\infty) $, 
$ \boldsymbol{\delta}=(\delta_1, \delta_2, \ldots, \delta_n) \in \R^n $,
$ \varepsilon \in (0,\frac{\lceil\nicefrac{n}{2}\rceil}{2}) $, 
$m\in\N_0\cap[\frac{\lceil\nicefrac{n}{2}\rceil}{2\varepsilon}-1,\infty)$, 
$ t \in (0,T] $ 
with 
$
  \sum^n_{i=1} \delta_i
  \geq
  \frac{1}{2} + \varepsilon
$
it holds that 
\begin{equation}
\label{eq:infinite}
\begin{split}
&
  \sup_{ \mathbf{u}=(u_0,u_1,\ldots,u_n) \in (\nzspace{(\cap_{r\in\R}H_r)})^{n+1} }
  \left[
  \frac{
    \big(\E\big[\| X^{ n,\mathbf{u} }_t \|^2_{H_{-q}}\big]\big)^{\nicefrac{1}{2}}
  }{
    \prod^n_{ i=1 }
    \|u_i\|_{H_{-\delta_i}}
  }
  \right]
\\&=
  \sup_{N\in\N}
  \sup_{ \mathbf{u}=(u_0,u_1,\ldots,u_n) \in (\nzspace{(\cap_{r\in\R}H_r)})^{n+1} }
  \left[
  \frac{
    \big(\E\big[\| X^{ n,\mathbf{u} }_t \|^2_{H_{-q}}\big]\big)^{\nicefrac{1}{2}}
  }{
    \prod^n_{ i=1 }
    \|\theta^{n+1}_{i+1}(\mathbf{u})\|_{H_{-\delta_i}}
  }
  \right]
\\&\geq
  \sup_{ N \in \N }
  \left[
  \frac{
    \big(\E\big[\| X^{ n,\mathbf{u}^{\nicefrac{\varepsilon}{(2\lceil\nicefrac{n}{2}\rceil)},m,\boldsymbol{\delta}}_{n,N} }_t \|^2_{H_{-q}}\big]\big)^{\nicefrac{1}{2}}
  }{
    \prod^n_{i=1}
    \| \theta^{n+1}_{i+1}(\mathbf{u}^{\nicefrac{\varepsilon}{(2\lceil\nicefrac{n}{2}\rceil)},m,\boldsymbol{\delta}}_{n,N}) \|_{H_{-\delta_i}}
  }
  \right]
\\&\geq
  \left[
  \inf_{N\in\N}
  \frac{1}{
    \prod^n_{i=1}
    \| \theta^{n+1}_{i+1}(\mathbf{u}^{\nicefrac{\varepsilon}{(2\lceil\nicefrac{n}{2}\rceil)},m,\boldsymbol{\delta}}_{n,N}) \|_{H_{-\delta_i}}
  }
  \right]
  \left[
  \sup_{ N \in \N }
    \big(\E\big[\| X^{ n,\mathbf{u}^{\nicefrac{\varepsilon}{(2\lceil\nicefrac{n}{2}\rceil)},m,\boldsymbol{\delta}}_{n,N} }_t \|^2_{H_{-q}}\big]\big)^{\nicefrac{1}{2}}
  \right]
  =\infty
  .
\end{split}
\end{equation}
This and H\"{o}lder's inequality establish item~\eqref{item:blowup}. 
The proof of Theorem~\ref{lem:blowup} is thus completed.
\end{proof}

\section*{Acknowledgements}

We gratefully acknowledge Adam Andersson for a number of useful comments.
This project has been supported through the SNSF-Research project 200021\_156603 
"Numerical approximations of nonlinear stochastic ordinary and partial differential equations".

\bibliographystyle{acm}
\bibliography{Bib/bibfile}
\end{document}